\numberwithin{equation}{section}
\newtheorem{theorem}{Theorem}
\newtheorem{proposition}[theorem]{Proposition}
\newtheorem{lemma}[theorem]{Lemma}
\newtheorem{corollary}[theorem]{Corollary}
\numberwithin{theorem}{section}
\theoremstyle{definition}
\newtheorem{definition}[theorem]{Definition}
\newtheorem{remark}[theorem]{Remark}
\newtheorem{example}[theorem]{Example}
\definecolor{darkred}{rgb}{0.7,0,0} % darkred color
\definecolor{darkgreen}{rgb}{0, .6, 0} % darkgreen color
\newcommand{\defn}[1]{{\color{darkred}\emph{#1}}} % emphasis of a definition
\renewcommand{\L}{\mathsf{L}}
\newcommand{\R}{\mathsf{R}}
\newcommand{\De}{\mathsf{D}}
\newcommand{\ZZ}{\mathbb{Z}}  
\newcommand{\mm}{\mathsf{minimaj}} 
\newcommand{\maj}{\mathsf{maj}}
\newcommand{\wt}{\mathsf{wt}}
\renewcommand\read{\mathsf{read}}
\newcommand\calOP{\mathcal{OP}}
\newcommand\encircle[1]{%
  \tikz[baseline=(X.base)] 
    \node (X) [draw, shape=circle, inner sep=0] {\strut $#1$};}
\begin{document}

\title{A minimaj-preserving crystal on ordered multiset partitions}

\author[Benkart]{Georgia Benkart}
\address[G. Benkart]{Department of Mathematics, University of Wisconsin-Madison, 480 Lincoln Dr.
Madison, WI 53706, U.S.A.}
\email{benkart@math.wisc.edu}

\author[Colmenarejo]{Laura Colmenarejo}
\address[L. Colmenarejo]{Department of Mathematics and Statistics,  York University, 4700 Keele Street, Toronto, 
Ontario M3J 1P3, Canada}
\email{lcolme@yorku.ca}
\urladdr{http://www.yorku.ca/~lcolme/}

\author[Harris]{Pamela E. Harris}
\address[P. E. Harris]{Mathematics and Statistics, Williams College, Bascom House, Rm 106C
Bascom House, 33 Stetson Court, Williamstown, MA 01267, U.S.A.} 
\email{peh2@williams.edu}
\urladdr{https://math.williams.edu/profile/peh2/}

\author[Orellana]{Rosa Orellana}
\address[R. Orellana]{Mathematics Department, Dartmouth College, 6188 Kemeny Hall,
Hanover, NH 03755, U.S.A.}
\email{Rosa.C.Orellana@dartmouth.edu}
\urladdr{https://math.dartmouth.edu/~orellana/}

\author[Panova]{Greta Panova}
\address[G. Panova]{Mathematics Department, University of Pennsylvania, 
209 South 33rd St, Philadelphia, PA 19104, U.S.A}
\email{panova@math.upenn.edu}
\urladdr{https://www.math.upenn.edu/~panova/}

\author[Schilling]{Anne Schilling}
\address[A. Schilling]{Department of Mathematics, University of California, One Shields
Avenue, Davis, CA 95616-8633, U.S.A.}
\email{anne@math.ucdavis.edu}
\urladdr{http://www.math.ucdavis.edu/\~{}anne}

\author[Yip]{Martha Yip}
\address[M. Yip]{Department of Mathematics, University of Kentucky, 715 Patterson Office Tower,
Lexington, KY 40506-0027, U.S.A.}
\email{martha.yip@uky.edu}
\urladdr{http://www.ms.uky.edu/~myip/}

\date{\today}
\keywords{Delta Conjecture, ordered multiset partitions, minimaj statistic, crystal bases, equidistribution of statistics}
\subjclass[2000]{Primary 05A19; Secondary 05A18, 05E05, 05E10, 20G42, 17B37}

\begin{abstract}
We provide a crystal structure on the set of ordered multiset partitions, which recently arose in
the pursuit of the Delta Conjecture.  This conjecture was stated by Haglund, Remmel and Wilson as a generalization 
of the Shuffle Conjecture. Various statistics on ordered multiset partitions arise in the combinatorial
analysis of the Delta Conjecture, one of them being the minimaj statistic, which is a variant of the
major index statistic on words. Our crystal has the property that the minimaj statistic is constant on connected 
components of the crystal. In particular, this yields another proof of the Schur positivity of the graded Frobenius 
series of the generalization $R_{n,k}$ due to Haglund, Rhoades and Shimozono of the coinvariant algebra $R_n$.
The crystal structure also enables us to demonstrate the equidistributivity of the minimaj statistic with the major index 
statistic on ordered multiset partitions.
\end{abstract}

\maketitle   

%%%%%%%%%%%%%%%%%%%%%%%%%%%%%%%%%%%%%%%%%%%%%%%%%%%%%%%%%
\section{Introduction}
%%%%%%%%%%%%%%%%%%%%%%%%%%%%%%%%%%%%%%%%%%%%%%%%%%%%%%%%%

The Shuffle Conjecture~\cite{HHLRU.2005}, now a theorem due to Carlsson and Mellit~\cite{CM.2015},
provides an explicit combinatorial description of the bigraded Frobenius characteristic of the $S_n$-module of 
diagonal harmonic polynomials. It is stated in terms of parking functions and involves two statistics, $\mathsf{area}$ 
and $\mathsf{dinv}$.

Recently, Haglund, Remmel and Wilson~\cite{HRW.2015} introduced a generalization of the Shuffle Theorem,
coined the Delta Conjecture. The Delta Conjecture involves two quasisymmetric functions
$\mathsf{Rise}_{n,k}(\mathbf{x};q,t)$ and $\mathsf{Val}_{n,k}(\mathbf{x};q,t)$, which have 
combinatorial expressions in terms of labelled Dyck paths. In this paper, we are only concerned with the specializations 
$q=0$ or $t=0$, in which case~\cite[Theorem 4.1]{HRW.2015} and~\cite[Theorem 1.3]{Rhoades.2016} show
\[
	\mathsf{Rise}_{n,k}(\mathbf{x};0,t) = \mathsf{Rise}_{n,k}(\mathbf{x};t,0) = 
	\mathsf{Val}_{n,k}(\mathbf{x};0,t) = \mathsf{Val}_{n,k}(\mathbf{x};t,0).
\]
It was proven in~\cite[Proposition 4.1]{HRW.2015} that
\begin{equation}
\label{equation.val}
	\mathsf{Val}_{n,k}(\mathbf{x};0,t) = \sum_{\pi\in \mathcal{OP}_{n,k+1}}  t^{\mm(\pi)} \mathbf{x}^{\wt(\pi)},
\end{equation}
where $\mathcal{OP}_{n,k+1}$ is the set of ordered multiset partitions of the multiset $\{1^{\nu_1},2^{\nu_2},\ldots\}$ 
into $k+1$ nonempty blocks and $\nu=(\nu_1,\nu_2,\ldots)$ ranges over all weak compositions of $n$.
The weak composition $\nu$ is also called the weight of $\pi$, denoted $\wt(\pi)=\nu$. 
In addition, $\mm(\pi)$ is the minimum value of the major index of the set partition $\pi$ over all possible ways to 
order the elements in each block of $\pi$. The symmetric function $\mathsf{Val}_{n,k}(\mathbf{x};0,t)$ is 
known~\cite{Wilson.2016,Rhoades.2016} to be Schur positive, meaning that the coefficients are polynomials in $t$ 
with nonnegative coefficients.

In this paper, we provide a crystal structure on the set of ordered multiset partitions $\mathcal{OP}_{n,k}$.
Crystal bases are $q\to 0$ shadows of representations for quantum groups $U_q(\mathfrak g)$~\cite{Kashiwara.1990,
Kashiwara.1991}, though they can also be understood from a purely combinatorial 
perspective~\cite{Stembridge.2003,Bump.Schilling.2017}. In type $A$, the character of a connected crystal
component with highest weight element of highest weight $\lambda$ is the Schur function $\mathsf{s}_\lambda$.
Hence, having a type $A$ crystal structure on a combinatorial set (in our case on $\mathcal{OP}_{n,k}$)
naturally yields the Schur expansion of the associated symmetric function. Furthermore, if the statistic
(in our case $\mm$) is constant on connected components, then the graded character can also be naturally
computed using the crystal.

Haglund, Rhoades and Shimozono~\cite{HRS.2016} introduced a generalization $R_{n,k}$ for $k\leqslant n$ 
of the coinvariant algebra $R_n$, with $R_{n,n}=R_n$. Just as the combinatorics of $R_n$ is governed by permutations 
in $S_n$, the combinatorics of $R_{n,k}$ is controlled by ordered set partitions of $\{1,2\ldots,n\}$ with $k$ blocks.
The graded Frobenius series of $R_{n,k}$ is (up to a minor twist) equal to $\mathsf{Val}_{n,k}(\mathbf{x};0,t)$.
It is still an open problem to find a bigraded $S_n$-module whose Frobenius image is  
$\mathsf{Val}_{n,k}(\mathbf{x};q,t)$. Our crystal provides another representation-theoretic interpretation of
$\mathsf{Val}_{n,k}(\mathbf{x};0,t)$ as a crystal character.

Wilson~\cite{Wilson.2016} analyzed various statistics on ordered multiset partitions, including $\mathsf{inv}$,
$\mathsf{dinv}$, $\maj$, and $\mm$. In particular, he gave a Carlitz type bijection, which
proves equidistributivity of $\mathsf{inv}$, $\mathsf{dinv}$, $\maj$ on $\mathcal{OP}_{n,k}$.
Rhoades~\cite{Rhoades.2016} provided a non-bijective proof that these statistics are also equidistributed with
$\mm$. Using our new crystal, we can give a bijective proof of the equidistributivity of the 
$\mm$ statistic and the $\maj$ statistic on ordered multiset partitions.

The paper is organized as follows. In Section~\ref{section.minimaj} we define ordered multiset partitions and 
the $\mm$ and $\maj$ statistics on them. In Section~\ref{section.bijection} we provide a bijection 
$\varphi$ from ordered multiset partitions to tuples of semistandard Young tableaux that will be used in 
Section~\ref{section.crystal} to define a crystal structure, which preserves $\mm$. We conclude in 
Section~\ref{section.equi} with a proof that the $\mm$ and $\maj$ statistics are equidistributed using 
the same bijection $\varphi$.

%%%%%%%%%%%%%%%%%%%%%%%%%%%%%%%%%%%%%%%%%%%%%%%%%%%%%%%%%
\subsection*{Acknowledgments}
Our work on this group project began at the workshop \emph{Algebraic Combinatorixx 2} at  the Banff International 
Research Station (BIRS) in May 2017.  ``Team Schilling,''  as our group of authors is known, would like to extend thanks 
to the organizers of ACxx2,  to BIRS for hosting this workshop, and to the Mathematical Sciences Research Institute (MSRI) 
for sponsoring a follow-up meeting of some of the group members at MSRI in July 2017 supported by the National Science
Foundation under Grant No. DMS-1440140.
We would like to thank Meesue Yoo for early collaboration and Jim Haglund, Brendon Rhoades and Andrew Wilson for 
fruitful discussions. This work benefited from computations and experimentations in {\sc Sage}~\cite{combinat,sage}.

P. E. Harris was partially supported by NSF grant DMS--1620202. 
R. Orellana was partially supported by NSF grant DMS--1700058. 
G. Panova was partially supported by NSF grant DMS--1500834.
A. Schilling was partially supported by NSF grant  DMS--1500050. 
M. Yip was partially supported by Simons Collaboration grant 429920.

%%%%%%%%%%%%%%%%%%%%%%%%%%%%%%%%%%%%%%%%%%%%%%%%%%%%%%%%%
\section{Ordered multiset partitions and the minimaj and maj statistics}
\label{section.minimaj}
%%%%%%%%%%%%%%%%%%%%%%%%%%%%%%%%%%%%%%%%%%%%%%%%%%%%%%%%%

We consider \defn{ordered multiset partitions} of order $n$ with $k$ blocks.  
Given a weak composition $\nu = (\nu_1, \nu_2, \ldots)$ of $n$ into nonnegative integer parts,  
which we denote $\nu \models n$,  let $\mathcal{OP}_{\nu,k}$ be the set of partitions of the multiset 
$\{i^{\nu_i} \mid i \geqslant 1\}$  into $k$ nonempty ordered blocks, such that the elements within each block are distinct.  
For each $i\geqslant 1$, the notation $i^{\nu_i}$ should be interpreted as saying that the integer $i$ occurs $\nu_i$ times 
in such a partition.  The weak composition $\nu$ is also called the \defn{weight} $\wt(\pi)$ of 
$\pi \in \mathcal{OP}_{\nu,k}$. Let
\[
	\mathcal{OP}_{n,k}  =  \bigcup_{\nu \models n}  \mathcal{OP}_{\nu,k}.
\]
It should be noted that in the literature $\mathcal{OP}_{n,k}$ is sometimes used for ordered set partitions rather
than ordered multiset partitions (that is, without letter multiplicities).

We now specify a particular reading order for an ordered multiset partition $\pi = (\pi_1\mid \pi_2 \mid \ldots \mid \pi_k)  
\in \mathcal{OP}_{n,k}$ with blocks $\pi_i$. Start by writing $\pi_k$ in increasing order. Assume $\pi_{i+1}$ has been ordered, 
and  let $r_i$ be the largest integer in $\pi_i$ that is less than or equal to the leftmost element of $\pi_{i+1}$.   If no such $r_i$ 
exists, arrange $\pi_i$ in increasing order.  When such an $r_i$ exists, arrange the elements of $\pi_i$ in increasing order, 
and then cycle them so that $r_i$ is the rightmost number. Continue with $\pi_{i-1}, \dots, \pi_2, \pi_1$ until all blocks have 
been ordered. This ordering of the numbers in $\pi$ is defined in \cite{HRW.2015} and is called the \defn{minimaj order}.  

\begin{example}
\label{example.pi}
lf $\pi = (157 \mid 24 \mid 56 \mid 468 \mid 13 \mid 123) \in \mathcal{OP}_{15,6}$,  then the minimaj order of $\pi$ is  
$\pi = (571 \mid 24 \mid 56 \mid 468 \mid 31 \mid 123)$.
\end{example}

For two sequences $\alpha,\beta$ of integers,  we write $\alpha < \beta$ to mean that each element of $\alpha$ is less
than every element of $\beta$.  Suppose $\pi \in \mathcal{OP}_{n,k}$ is in minimaj order. Then each block $\pi_i$ of $\pi$ 
is nonempty and 
can be written in the form $\pi_i = b_i \alpha_i \beta_i$, where $b_i \in \ZZ_{>0}$, and $\alpha_i,\beta_i$ are sequences 
(possibly empty) of distinct increasing integers such that either $\beta_i < b_i  < \alpha_i$ or $\alpha_i=\emptyset$. 
Inequalities with empty sets should be ignored. 

\begin{lemma}
\label{lemma.minimaj order}
With the above notation, $\pi \in \mathcal{OP}_{n,k}$ is in minimaj order if the following hold:
\begin{itemize}
\item [{\rm (1)}]  \ $\pi_k = b_k\alpha_k$ with $b_k<\alpha_k$ and $\beta_k = \emptyset$;
\item [{\rm (2)}]  \  for $1 \leqslant i < k$,    either
\begin{itemize} \item[{(a)}] \ $\alpha_i = \emptyset$,  $\pi_i = b_i \beta_i$,  and  
$b_i < \beta_i \leqslant b_{i+1}$,    or
 \item[{(b)}] \   $\beta_i \leqslant b_{i+1} < b_i < \alpha_i$.  
 \end{itemize}
 \end{itemize}
\end{lemma}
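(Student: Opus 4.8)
The statement is a characterization of the minimaj order in terms of the local shape of consecutive blocks, so the natural approach is to unwind the recursive definition of the minimaj order and check that it is equivalent to conditions (1) and (2). I would argue by induction on $k$, processing the blocks from right to left exactly as in the definition. The base case concerns the rightmost block $\pi_k$: by definition it is written in increasing order, so if $\pi_k = b_k\alpha_k\beta_k$ with $\beta_k < b_k < \alpha_k$ (or $\alpha_k = \emptyset$), then being increasing forces $\beta_k = \emptyset$ and $b_k < \alpha_k$, which is exactly (1); conversely (1) says $\pi_k$ is increasing.

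\textbf{Inductive step.} Assume $\pi_{i+1},\dots,\pi_k$ have been arranged in minimaj order and that the leftmost element of $\pi_{i+1}$ equals $b_{i+1}$ (this is immediate from (1) when $i+1=k$, and from (2) when $i+1<k$, since in both cases $b_{i+1}$ is the smallest element of $\pi_{i+1}$ — in case (2a) because $b_{i+1}<\beta_{i+1}$ and $\alpha_{i+1}=\emptyset$, in case (2b) because $\beta_{i+1}\leqslant b_{i+1}<\alpha_{i+1}$... wait, here $\beta_{i+1}$ could be nonempty and smaller, so I need to be careful). Let me restate: the leftmost element of a block in minimaj order is $b$, and I should track instead what the \emph{smallest} element is versus what the \emph{leftmost} element is. The key observation is that the recipe compares $r_i$ against the \emph{leftmost} element of $\pi_{i+1}$, which in minimaj order is $b_{i+1}$. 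So: if $\pi_i$ contains an element $\leqslant b_{i+1}$, let $r_i$ be the largest such; then minimaj order puts the elements of $\pi_i$ in increasing order and cycles so $r_i$ is rightmost, i.e. $\pi_i = (\text{elements} > r_i \text{ in increasing order})\,(\text{elements} \leqslant r_i \text{ in increasing order})$. Setting $b_i := $ the leftmost element after cycling $=$ the smallest element of $\pi_i$ that is $> r_i$ if any such exists, this is precisely the decomposition $\pi_i = b_i\alpha_i\beta_i$ with $\beta_i$ = (elements $\leqslant r_i$), hence $\beta_i \leqslant r_i \leqslant b_{i+1}$ and $\beta_i < b_i < \alpha_i$, giving case (2b) when $\alpha_i\neq\emptyset$; if all elements of $\pi_i$ are $\leqslant r_i = b_{i+1}$... no: $r_i$ is in $\pi_i$ and after cycling is rightmost, so if $r_i$ is the max of $\pi_i$ we instead are in a degenerate situation — I would treat the subcase where $\alpha_i=\emptyset$ separately, yielding $\pi_i = b_i\beta_i$ with $b_i$ = smallest element, $b_i<\beta_i$ and $\beta_i\leqslant b_{i+1}$, which is (2a). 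If $\pi_i$ contains \emph{no} element $\leqslant b_{i+1}$, the recipe leaves $\pi_i$ increasing, so $\beta_i=\emptyset$ and $b_i<\alpha_i$ with $b_i>b_{i+1}$; but this is subsumed in (2b) with $\beta_i=\emptyset$ (the inequality $\beta_i\leqslant b_{i+1}$ is vacuous). Conversely, given that $\pi$ satisfies (1)–(2), I verify by the same induction that reconstructing via the recipe returns the same arrangement: the decomposition $b_i\alpha_i\beta_i$ determines $r_i$ (it must be the rightmost element, which is the max of $\beta_i$ if $\beta_i\neq\emptyset$, else there is no $r_i$ and $\pi_i$ is increasing), and the inequalities guarantee this $r_i$ is exactly the largest element of $\pi_i$ that is $\leqslant b_{i+1}$ = leftmost element of $\pi_{i+1}$.

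\textbf{Main obstacle.} The routine content is bookkeeping, but the genuinely delicate point is the interplay between "leftmost element of $\pi_{i+1}$" (what the recipe uses) and "smallest element of $\pi_{i+1}$", together with correctly handling the boundary subcases: $\alpha_i=\emptyset$, $\beta_i=\emptyset$, and whether $r_i$ equals the maximum of $\pi_i$. I expect the cleanest writeup is to first establish the auxiliary fact that in minimaj order the leftmost entry of $\pi_{i+1}$ is $b_{i+1}$ and that $b_{i+1}$ is the smallest entry of $\pi_{i+1}$ \emph{not in} $\beta_{i+1}$ — equivalently $\beta_{i+1}$ consists precisely of those entries of $\pi_{i+1}$ that are $\leqslant b_{i+2}$ — and then the equivalence of the recipe with (2a)/(2b) drops out. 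Care must also be taken that the inequalities in (2) are stated with "$\leqslant$" at the junctions (reflecting that $r_i$ may equal the leftmost element of $\pi_{i+1}$) but "$<$" internally (reflecting distinctness within a block and the strict cycling); tracking which inequalities are strict is where an error could slip in.
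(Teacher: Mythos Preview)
The paper does not include a proof of this lemma; it is stated as an immediate reformulation of the recursive recipe for the minimaj order given just before it. Your proposal---unwind the recipe from right to left, use that the leftmost entry of $\pi_{i+1}$ is $b_{i+1}$ by definition of the decomposition, and then do the case split on whether $r_i$ exists and whether it equals $\max(\pi_i)$---is exactly the direct verification the paper is tacitly invoking, and it is correct.

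One small clarification worth making explicit in your writeup: when $\pi_i$ happens to be an increasing sequence, the decomposition $\pi_i = b_i\alpha_i\beta_i$ is not uniquely determined by the clause ``either $\beta_i < b_i < \alpha_i$ or $\alpha_i = \emptyset$'' alone; you are (correctly) choosing $\beta_i = \emptyset$ when every element of $\pi_i$ exceeds $b_{i+1}$ (landing in (2b)) and $\alpha_i = \emptyset$ when $\max(\pi_i) \leqslant b_{i+1}$ (landing in (2a)). Stating this choice up front removes the apparent ambiguity and makes the strict/non-strict bookkeeping you flag as the main obstacle entirely mechanical.
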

 
A sequence or word $w_1 w_2 \cdots w_n$ has a \defn{descent} in position $1\leqslant i<n$ if $w_i>w_{i+1}$.
Let $\pi \in \mathcal{OP}_{n,k}$ be in minimaj order. Observe that a descent occurs in $\pi_i$ only in Case~2\,(b)
of Lemma~\ref{lemma.minimaj order}, and such a descent is either between the largest and smallest elements of $\pi_i$
or between the last element of $\pi_i$ and the first element of $\pi_{i+1}$.   
 
\begin{example}
Continuing Example~\ref{example.pi} with $\pi = (571 \mid 24 \mid 56 \mid 468 \mid 31 \mid 123)$, we have
\[
 	\begin{array}{lll}
	b_1 = 5, \alpha_1 =7, \beta_1 = 1 &\qquad b_2 = 2, \alpha_2 = \emptyset, \beta_2 =4 &\qquad
 	b_3 = 5, \alpha_3 = 6, \beta_3 =\emptyset \\ 
	b_4 = 4,  \alpha_4 = 68, \beta_4 =  \emptyset  &\qquad
 	b_5 = 3,  \alpha_5 = \emptyset, \beta_5 = 1 &\qquad b_6 = 1,  \alpha_6 = 23,  \beta_6 = \emptyset. 
	\end{array}
\]
\end{example}
 
Suppose that $\pi$ in minimaj order has descents in positions
\[
	\De(\pi) = \{d_1, d_1+d_2, \ldots, d_1+d_2 + \cdots + d_\ell\}
\]
for some $\ell \in [0,k-1]$  ($\ell= 0$ indicates no descents). Furthermore assume that these descents occur in the blocks  
$\pi_{i_1}, \pi_{i_1+i_2}, \ldots,\pi_{i_1+i_2+\cdots + i_\ell}$, where  $i_j >0$ for $1\leqslant j \leqslant \ell$ and 
$i_1+i_2+\cdots+i_\ell <k$.  Assume $d_{\ell+1}$ and $i_{\ell+1}$ are the distances to the end, that is,  
$d_1+d_2 + \cdots+d_\ell + d_{\ell+1} = n$ and $i_1+i_2+\cdots+ i_\ell + i_{\ell+1} = k$.  

The \defn{minimaj statistic} $\mm(\pi)$ of $\pi \in \mathcal{OP}_{n,k}$ as given by~\cite{HRW.2015} is
\begin{equation}
\label{equation.minimaj}
	\mm(\pi) = \sum_{d \in \De(\pi)} d  =  \sum_{j=1}^\ell (\ell+1-j) d_j.
\end{equation}

\begin{example}
The descents for the multiset partition  $\pi = (57.1 \mid 24 \mid 56. \mid 468. \mid 3.1 \mid 123)$  occur at positions 
$\De(\pi)=\{2,7,10,11\}$ and are designated with periods. Hence $\ell=4$, $d_1 = 2$, $d_2 = 5$, $d_3 = 3$, $d_4 = 1$ 
and $d_5 = 4$, and  $\mm(\pi) = 2 + 7 + 10 + 11 = 30$.  The descents occur in blocks \  $\pi_1$, $\pi_3$, $\pi_4$, and
$\pi_5$,  so that $i_1 = 1$, $i_2 = 2$, $i_3 = 1$, $i_4 = 1$, and $i_5 = 1$.  
\end{example}

To define the \defn{major index} of $\pi \in \mathcal{OP}_{n,k}$, we consider the word $w$ obtained by ordering each 
block $\pi_i$ in decreasing order, called the \defn{major index order}~\cite{Wilson.2016}.
Recursively construct a word $v$ by setting $v_0=0$ and $v_j = v_{j-1}+ \chi(j \text{ is the last position in its block})$
for each $1\leqslant j \leqslant n$. Here $\chi(\text{True})=1$ and $\chi(\text{False})=0$. Then
\begin{equation}
\label{equation.maj}
	\maj(\pi) = \sum_{j \colon w_j>w_{j+1}} v_j.
\end{equation}

\begin{example}
Continuing Example~\ref{example.pi}, note that the major index order of $\pi = (157 \mid 24 \mid 56 \mid 468 \mid 13 
\mid 123) \in \mathcal{OP}_{15,6}$ is $\pi = (751 \mid 42 \mid 65 \mid 864 \mid 31 \mid 321)$. Writing the word $v$ 
underneath $w$ (omitting $v_0=0$), we obtain
\begin{equation*}
\begin{split}
	w &= 751 \mid 42 \mid 65 \mid 864\mid 31 \mid 321\\
	v &= 001 \mid 12 \mid 23 \mid 334 \mid 45 \mid 556,
\end{split}
\end{equation*}
so that $\maj(\pi) = 0+0+1+2+3+3+4+4+5+5=27$.
\end{example}

Note that throughout this section, we could have also restricted ourselves to ordered multiset partitions with 
letters in $\{1,2,\ldots, r\}$ instead of $\ZZ_{>0}$. That is, let $\nu=(\nu_1,\ldots,\nu_r)$ be a weak composition of $n$ and let
$\mathcal{OP}^{(r)}_{\nu,k}$ be the set of partitions of the multiset $\{i^{\nu_i} \mid 1\leqslant i \leqslant r\}$ into
$k$ nonempty ordered blocks, such that the elements within each block are distinct.  Let
\[
	\mathcal{OP}^{(r)}_{n,k}  =  \bigcup_{\nu \models n}  \mathcal{OP}^{(r)}_{\nu,k}.
\]
This restriction will be important when we discuss the crystal structure on ordered multiset partitions.

%%%%%%%%%%%%%%%%%%%%%%%%%%%%%%%%%%%%%%%%%%%%%%%%%%%%%%%%%
\section{Bijection with tuples of semistandard Young tableaux}
\label{section.bijection}
%%%%%%%%%%%%%%%%%%%%%%%%%%%%%%%%%%%%%%%%%%%%%%%%%%%%%%%%%

In this section, we describe a bijection from ordered multiset partitions to tuples of semistandard Young tableaux
that allows us to impose a crystal structure on the set of ordered multiset partitions in Section~\ref{section.crystal}.

Recall that a \defn{semistandard Young tableau} $T$ is a filling of a (skew) Young diagram (also called the \defn{shape} of $T$)
with positive integers that weakly increase across rows and strictly increase down columns. The \defn{weight} of $T$ is 
the tuple $\wt(T)=(a_1,a_2,\ldots)$, where $a_i$ records the number of letters $i$ in $T$. The set of semistandard Young 
tableaux of shape $\lambda$, where $\lambda$ is a (skew) partition, is denoted by $\mathsf{SSYT}(\lambda)$. If we want to 
restrict the entries in the semistandard Young tableau from $\ZZ_{>0}$ to a finite alphabet $\{1,2,\ldots,r\}$, we denote the set by $\mathsf{SSYT}^{(r)}(\lambda)$.  

The tableaux relevant for us here are of two types: a single column of boxes with entries that increase from top to bottom,  
or a skew ribbon tableau.  If $\gamma =(\gamma_1,\gamma_2, \dots, \gamma_m)$ is a skew ribbon shape with 
$\gamma_j$ boxes in the $j$-th row starting from the bottom, the ribbon condition requires that row $j+1$ starts in the 
last column of row $j$. This condition is equivalent to saying that $\gamma$ is connected and contains no $2 \times 2$ 
block of squares.  For example
\ytableausetup{boxsize=1.1em}
\[
 \ytableaushort{\none\none {\mbox{}}{\mbox{}}{\mbox{}},\none\none {\mbox{}},\none 
	{\mbox{}}{\mbox{}}}
\]
corresponds to $\gamma = (2,1,3)$.
Let $\mathsf{SSYT}(1^c)$ be the set of semistandard Young tableaux obtained by filling a column of 
length $c$ and $\mathsf{SSYT}(\gamma)$ be the set of semistandard Young tableaux obtained by filling the skew ribbon
shape $\gamma$.

To state our bijection, we need the following notation. For fixed positive integers $n$ and $k$, assume 
$\mathrm{D}= \{d_1,d_1+d_2,\ldots,d_1+d_2+\cdots+d_\ell\} \subseteq \{1,2,\dots,n-1\}$ and 
$\mathrm{I} = \{i_1,i_1+i_2,\ldots,i_1+i_2+\cdots+i_\ell\} \subseteq\{1,2,\dots, k-1\}$ are sets of $\ell$ distinct elements 
each. Define $d_{\ell+1} := n-(d_1+\cdots +d_\ell)$ and $i_{\ell+1} := k - (i_1+\cdots +i_\ell)$.

\begin{proposition}
\label{P:biject}  
For fixed positive integers $n$ and $k$ and sets $\mathrm{D}$ and $\mathrm{I}$ as above, let
\[
	\mathrm{M(D,I)} = \{\pi \in \mathcal{OP}_{n,k} \mid  \De(\pi) = \mathrm{D}, \ 
	\text{and the descents occur in $\pi_i$ for $i \in \mathrm{I}$}\}.
\]
Then the following map is a weight-preserving bijection:
\begin{equation}
\begin{split}
	\varphi \colon \mathrm{M(D,I)} &\rightarrow \mathsf{SSYT}(1^{c_1}) \times \cdots 
	\times \mathsf{SSYT}(1^{c_\ell}) \times \mathsf{SSYT}(\gamma)\\
	\pi &\mapsto T_1 \times \cdots \times T_\ell \times T_{\ell+1}
\end{split}
\end{equation}
where
\begin{itemize}  
\item [{\rm (i)}] \ $\gamma = (1^{d_1-i_1}, i_1, i_2, \dots, i_{\ell+1})$ and $c_j = d_{\ell+2-j} - i_{\ell+2-j}$ for 
$1\leqslant j \leqslant \ell$.
\item [{\rm(ii)}] \ The skew ribbon tableau $T_{\ell+1}$ of shape $\gamma$ is constructed as follows:
\begin{itemize}  
\item [$\bullet$] The entries in the first column of the skew ribbon tableau $T_{\ell+1}$ beneath the first box are the first 
$d_1-i_1$ elements of $\pi$ in increasing order from top to bottom, excluding any $b_j$ in that range.
\item[$\bullet$] The remaining rows $d_1-i_1+j$ of $T_{\ell+1}$ for $1\leqslant j \leqslant \ell+1$ are filled with\newline
$b_{i_1+\dots + i_{j-1} + 1}, b_{i_1+\dots + i_{j-1} + 2}, \dots, b_{i_1+\dots + i_j}$.
\end{itemize} 
\item [{\rm(iii)}] The tableau $T_j$ for $1\leqslant j \leqslant \ell$ is the column filled with the elements of $\pi$ from the positions 
$d_1+d_2+\cdots + d_{\ell-j+1}+1$ through and including position $d_1+d_2+\cdots + d_{\ell-j+2}$, but excluding any 
$b_i$ in that range.
\end{itemize}
 \end{proposition}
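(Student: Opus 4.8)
The plan is to construct $\varphi$ together with an explicit inverse $\psi$, and then to verify that each lands where claimed; abbreviate $I_j := i_1 + \cdots + i_j$ for $0 \leqslant j \leqslant \ell+1$, so $I_0 = 0$ and $I_{\ell+1} = k$. Fix $\pi \in \mathrm{M(D,I)}$ in minimaj order; then $\pi$ is determined by its reading word $w = w_1 \cdots w_n$ together with the positions of the block heads $b_1, \dots, b_k$ (call these the \emph{separators}), and, because $\De(\pi) = \mathrm{D}$, the word $w$ is the concatenation $R_1 R_2 \cdots R_{\ell+1}$ of its maximal weakly increasing runs, with $|R_m| = d_m$. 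The first task is to locate the separators among the runs: using Lemma~\ref{lemma.minimaj order} and the observation that a descent lies in $\pi_i$ exactly when $i \in \mathrm{I}$, one shows that $R_m$ begins at $b_{I_{m-1}+1}$ (or, when the $(m-1)$st descent is internal to $\pi_{I_{m-1}}$, at $\beta_{I_{m-1}}$ inside that block), runs through the type-$2(a)$ blocks $\pi_{I_{m-1}+1}, \dots, \pi_{I_m - 1}$, and ends at the $m$th descent inside $\pi_{I_m}$; hence $R_m$ contains precisely the separators $b_{I_{m-1}+1}, \dots, b_{I_m}$ and therefore has $d_m - i_m$ non-separator entries ($d_1 - i_1$ for $R_1$), matching the cell counts of the shapes $1^{c_j}$ and $\gamma$. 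That $\varphi(\pi)$ is then a tuple of semistandard Young tableaux of those shapes is a direct reading of Lemma~\ref{lemma.minimaj order}: the non-separator entries within a fixed run are \emph{strictly} increasing (chain $\max\beta_i \leqslant b_{i+1} < \min\beta_{i+1}$ through the blocks of the run, and similarly at its terminal block), so every column $T_j$ and the run of non-separator cells below $b_1$ in column $1$ of $T_{\ell+1}$ is strictly increasing; the separators filling one row of $T_{\ell+1}$ are $b_{I_{m-1}+1} \leqslant \cdots \leqslant b_{I_m}$ by case $2(a)$, so rows weakly increase; and at a corner of $T_{\ell+1}$ the two cells carry $b_{I_m}$ (lower) and $b_{I_m+1}$ (upper) with $b_{I_m+1} < b_{I_m}$, because $\pi_{I_m}$ is a descent-block and hence obeys case $2(b)$ --- while $b_1$ is less than the smallest non-separator of $R_1$ --- so every column of $T_{\ell+1}$ strictly increases. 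Finally $\varphi$ is visibly weight-preserving, since each entry of $\pi$ occupies exactly one cell of $T_1 \sqcup \cdots \sqcup T_{\ell+1}$.

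\textbf{The inverse.} Given a tuple $(T_1, \dots, T_\ell, T_{\ell+1})$ of semistandard Young tableaux of the prescribed shapes, I would recover $\pi$ as follows: read the separators $b_1, \dots, b_k$ in order off the successive rows of $T_{\ell+1}$ (row $m$ supplying $b_{I_{m-1}+1}, \dots, b_{I_m}$); read the non-separators of $R_1$ off the cells of column $1$ of $T_{\ell+1}$ lying below $b_1$, and those of $R_{\ell+2-j}$ off the column $T_j$ for $1 \leqslant j \leqslant \ell$; form each run $R_m$ by arranging its separators and non-separators in weakly increasing order, breaking a tie between an equal separator and non-separator in favour of the non-separator; set $w := R_1 R_2 \cdots R_{\ell+1}$; and cut $w$ into blocks immediately before each separator. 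The substantive point is that $\psi(T_1, \dots, T_{\ell+1}) \in \mathrm{M(D,I)}$: every block has distinct entries (a separator can tie only with a non-separator coming from an earlier block of the same run, which lands in a different block after cutting); the word $w$ has descents precisely at the run boundaries, since $\max R_m \geqslant b_{I_m} > b_{I_m+1} \geqslant \min R_{m+1}$ (by the corner condition on $T_{\ell+1}$) forces a descent between $R_m$ and $R_{m+1}$ while each run is weakly increasing, and these descents sit in $\pi_{I_1}, \dots, \pi_{I_\ell}$, so $\De(\pi) = \mathrm{D}$ with descent-blocks indexed by $\mathrm{I}$; and $\pi$ is in minimaj order, which is exactly the statement that the SSYT row and column inequalities on the tuple re-read as conditions $(1)$, $(2a)$, $(2b)$ of Lemma~\ref{lemma.minimaj order} for $\pi$.

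\textbf{Mutual inverseness.} It then remains to check $\psi \circ \varphi = \mathrm{id}$ and $\varphi \circ \psi = \mathrm{id}$. Both maps only move entries between ``word form'' and ``tableau form,'' so the one genuine point is that re-merging a run in $\psi$ reproduces the minimaj reading order used by $\varphi$: inside a run every block contributes $b_i$ followed by $\beta_i$ (with a possible terminal $\alpha_i$ at the last block), and by Lemma~\ref{lemma.minimaj order} this listing is weakly increasing with the only ties occurring between a $\beta_i$-element and the next separator $b_{i+1}$, the $\beta_i$-element appearing first --- which is precisely the tie-break built into $\psi$. Hence $\psi$ and $\varphi$ are mutually inverse, and $\varphi$ is the asserted weight-preserving bijection.

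\textbf{Main obstacle.} The technical heart lies in the two ``lands in the correct set'' verifications: extracting from Lemma~\ref{lemma.minimaj order} the exact allocation of separators to runs, and, conversely, checking that an \emph{arbitrary} tuple of semistandard Young tableaux of the prescribed shape yields, via $\psi$, a word that is genuinely in minimaj order and whose descents occur in exactly the blocks indexed by $\mathrm{I}$. The unifying principle is that the SSYT row/column inequalities and the minimaj inequalities of Lemma~\ref{lemma.minimaj order} are the same inequalities under the dictionary (separators $\leftrightarrow$ the $b_i$; within-run non-separators $\leftrightarrow$ the $\alpha_i, \beta_i$; rows of the ribbon $\leftrightarrow$ runs; corners of the ribbon $\leftrightarrow$ descent-blocks), but the degenerate configurations --- $\alpha_i$ or $\beta_i$ empty, and a descent sitting between two blocks rather than internally to one --- have to be matched up one at a time.
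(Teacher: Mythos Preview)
Your argument is correct and follows essentially the same route as the paper's proof: both verify that $\varphi$ lands in the product of SSYT sets by reading the row and column inequalities directly off the cases of Lemma~\ref{lemma.minimaj order}, and both construct an explicit inverse by recovering the $b_i$ from the rows of the ribbon and then redistributing the column entries among the blocks. The only cosmetic difference is that the paper phrases the inverse as ``cut the column entries into the intervals $(b_i,b_{i+1}]$,'' whereas you phrase it as ``merge separators and non-separators into a weakly increasing run with the non-separator winning ties, then cut before each separator''; these two descriptions are equivalent, and your explicit tie-breaking discussion makes the mutual inverseness check slightly more transparent than in the paper.
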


 Note that in item (ii), the rows of $\gamma$ are assumed to be numbered from bottom 
 to top and are filled starting with row  $d_1-i_1+1$ and ending with row $d_1-i_1+\ell+1$
at the top.
  
Also observe that since the bijection stated in Proposition~\ref{P:biject} preserves the weight, it can be restricted to a 
bijection
 \[
 	\varphi \colon \mathrm{M(D,I)}^{(r)} \rightarrow \mathsf{SSYT}^{(r)}(1^{c_1}) \times \cdots 
	\times \mathsf{SSYT}^{(r)}(1^{c_\ell}) \times \mathsf{SSYT}^{(r)}(\gamma),
\]
where $\mathrm{M(D,I)}^{(r)} = \mathrm{M(D,I)} \cap \mathcal{OP}^{(r)}_{n,k}$.

Before giving the proof, it is helpful to consider two examples to illustrate the map $\varphi$.  
 
\begin{example}\label{ex:ell=0}  
 When the entries of $\pi \in \mathcal{OP}_{n,k}$ in minimaj order are increasing, then $\ell = 0$.   
 In this case, $d_1 = n$ and $i_1 = k$.   The mapping $\varphi$ takes $\pi$ to the semistandard tableau $T =T_1$ 
 that is of ribbon-shape $\gamma = (1^{n-k},k)$. The  entries of the boxes in the first column of the tableau $T$ are  
 $b_1$, followed by the $n-k$ numbers in the sequences $\beta_1,\beta_2,\dots, \beta_{k-1},\alpha_k$
from top to bottom. (The fact that $\pi$ has no descents means that all the $\alpha_i = \emptyset$ for $1\leqslant i <k$
and we are in Case 2\,(a) of Lemma~\ref{lemma.minimaj order} for $1\leqslant i<k$ and Case 1 for $i=k$.)  Columns $2$ 
through $k$ of $T_1$ are filled with the numbers $b_2,\dots,b_k$ respectively, and $b_2 \leqslant b_3 \leqslant \cdots 
\leqslant b_k$.  The result is a semistandard tableau $T_1$ of hook shape. 

For example, consider $\pi = (12 \mid 2 \mid 234) \in \mathcal{OP}_{6,3}$. Then $\gamma=(1^3,3)$ and
\[
	T_1 = \ytableaushort{122,2,3,4}\;.
\]

Now suppose that $T$ is such a hook-shape tableau with entries $b_1,b_2,\dots,b_{k}$ from left to right in its top row, 
and entries $b_1, t_1, \dots, t_{n-k}$ down its first column. The inverse $\varphi^{-1}$  maps $T$ to the set partition $\pi$ 
that has as its first block $\pi_1= b_1\beta_1$, where $\beta_1=t_1, \dots, t_{m_1}$, and $t_1 < \dots < t_{m_1} \leqslant b_2$,
but $t_{m_1+1} > b_2$ so that $\beta_1$ is in the interval $(b_1,b_2]$. The second block of $\pi$ is given by 
$\pi_2 = b_2 \beta_2$, where $\beta_2 = t_{m_1+1},\dots,t_{m_2}$, and $t_{m_1+1}< t_{m_1+2}< \dots <t_{m_2} \leqslant
b_3$,  but $t_{m_2+1} > b_3$ and $\beta_2  \subseteq (b_2,b_3]$.  
Continuing in this fashion, we set $\pi_k = b_k \alpha_k$, where $\alpha_k = t_{m_{k-1}+1},\dots, t_{n-k}$ and
$\alpha_k \subseteq (b_k,+\infty)$.   Then  $\varphi^{-1}(T) = \pi = (\pi_1\mid \pi_2 \mid \cdots \mid \pi_k)$,
where the ordered multiset partition $\pi$ has no descents.  
\end{example} 
 
\begin{example}
\label{example.pi phi}
The ordered multiset partition $\pi = (124 \mid 45. \mid 3 \mid 46.1\mid 23.1\mid 1 \mid 25) \in \mathcal{OP}_{15,7}$ 
has the following data:
 \[
 	\begin{array}{lll}  b_1 = 1,  \alpha_1 =\emptyset, \beta_1 = 24  & \qquad b_2 = 4,\alpha_2 = 5,
	\beta_2 =\emptyset  & \qquad b_3 = 3, \alpha_3 = \emptyset, \beta_3 =\emptyset \\ 
	b_4 = 4, \alpha_4 = 6, \beta_4 =  1 & \qquad  b_5 = 2, \alpha_5 = 3,  \beta_5 = 1 &  \qquad b_6 = 1,
 	\alpha_6 = \emptyset,  \beta_6 = \emptyset \\
	b_7=2, \alpha_7 = 5, \beta_7 = \emptyset & \\
  	\end{array}
\]
and $\ell=3$, $d_1=5, d_2=d_3=3, d_4=4$ and $i_1=i_2=2, i_3=1, i_4=2$. Then 
\ytableausetup{boxsize=1.1em}
\[
	\pi = ({\color{blue}1}{\color{red}2}{\color{red}4} \mid {\color{blue}4}{\color{red}5}. \mid {\color{blue}3} \mid 
	{\color{blue}4}{\color{darkgreen}6}.{\color{orange}1}\mid {\color{blue}2}{\color{orange}3}.{\color{violet}1}\mid {\color{blue}1} 
	\mid {\color{blue}2}{\color{violet}5}) \mapsto
	\ytableaushort{{\color{violet}1},{\color{violet}5}} \times  \ytableaushort{{\color{orange}1},{\color{orange}3}} 
	\times \ytableaushort{{\color{darkgreen}6}} \times 
	\ytableaushort{\none\none {\color{blue}1}{\color{blue}2},\none\none {\color{blue}2},\none 
	{\color{blue}3}{\color{blue}4},{\color{blue}1}{\color{blue}4},{\color{red}2},{\color{red}4},{\color{red}5}}\;.
\]
\end{example}

It is helpful to keep the following picture in mind during the proof of Proposition~\ref{P:biject}, where the map $\varphi$ 
is taking the ordered multiset partition $\pi$ to the collection of tableaux $T_i$  as illustrated below. We adopt the shorthand 
notation $\eta_j :=  i_1+\cdots+i_j$ for $1\leqslant j \leqslant \ell$, where we also set $\eta_0=0$ and $\eta_{\ell+1}=k$:
\[
	\pi = (b_1 \beta_1 | b_2 \beta_2 | \cdots |b_{\eta_1} \alpha_{\eta_1}. \beta_{\eta_1}|b_{\eta_1+1} \beta_{\eta_1+1} 
	|\cdots  |b_{\eta_j} \alpha_{\eta_j}.\beta_{\eta_j}| b_{\eta_j+1} \beta_{\eta_j+1} | \cdots | b_k \alpha_k)
\]
\ytableausetup{boxsize=2.9em}
\begin{equation}
\label{equation.T picture}
	T_{\ell+1-j} = \ytableaushort{{\beta_{\eta_j}},{\beta_{\eta_j+1}},{\vdots},{\beta_{\eta_{j+1}-1}},{\alpha_{\eta_{j+1}}}} 
	\; \text{ for }1\leqslant j\leqslant \ell, \quad
	T_{\ell+1} = \ytableaushort{\none\none\none\none{ b_{\eta_\ell+1}}{\cdots}{b_{\eta_{\ell+1}}},
	\none\none\none\none{\vdots},
	\none\none{b_{\eta_{j-1}+1}}{\cdots}
	{b_{\eta_j}}, \none\none{\vdots},{b_1}{\cdots}{b_{\eta_1}},{\beta_1},{\vdots},{\beta_{\eta_1-1}},{\alpha_{\eta_1}}}\;.
\end{equation}

\begin{proof}[Proof of Proposition \ref{P:biject}]
Since the entries of $\pi$ are mapped bijectively to the entries of $T_1 \times T_2 \times \cdots \times T_{\ell+1}$,  
the map $\varphi$ preserves the total weight $\wt(\pi)=(p_1,p_2,\ldots) \mapsto \, \wt(T)$, where $p_i$ is the number of 
entries $i$ in $\pi$ for $i \in \mathbb{Z}_{>0}$. We need to show that $\varphi$ is well defined and 
exhibit its inverse. For this, we can assume that $\ell \geqslant 1$, as the case $\ell = 0$ was treated in
Example~\ref{ex:ell=0}. 

Observe first that there are $d_j$ entries in $\pi$ which are between two consecutive descents, 
and among these entries there are exactly $i_j$ entries that are first elements of a block, since descents happen $i_j$
blocks apart. This implies that the tableaux have the shapes claimed. 

To see that the tableaux are semistandard, consider first  $T_{\ell+1}$, and let $\eta_j = i_1+\cdots+i_j$ as above. 
A row numbered $d_1-i_1+j$ for $1 \leqslant j \leqslant \ell+1$ is weakly increasing, because the lack of a descent in 
a block $\pi_i$ means $b_i \leqslant b_{i+1}$, and this holds for $i$ in the interval  $\eta_{j-1} +1, \ldots, \eta_{j}$ between 
two consecutive descents.  The leftmost column is strictly increasing because it consists of the elements $b_1 < \beta_1 
< \beta_2 < \cdots <\beta_{\eta_1-1} <\alpha_{\eta_1}$ (the lack of a descent before $\pi_{\eta_1}$ implies that 
$\alpha_i=\emptyset$ for $i<\eta_1$ and $b_i <\beta_i \leqslant b_{i+1}< \beta_{i+1}$ by  Case 2\,(a)
of Lemma~\ref{lemma.minimaj order}).

The rest of the columns of $T_{\ell+1}$ contain elements $b_i$, where $b_{\eta_{j-1}+1}$ is the first element in row 
$d_{1}-i_1+j$ and $b_{\eta_j}$ is the last, and $b_{\eta_{j}+1}$ is the first element in the row immediately above it.  We have 
$b_{\eta_j} > b_{\eta_j+1}$,  since there is a descent in block $\pi_{i_j}$ which implies this inequality by the ordering 
condition in Case 2\,(b) of Lemma~\ref{lemma.minimaj order}.

The strict inequalities for the column tableaux $T_1,\ldots,T_{\ell}$ hold for the same reason that they hold for the first 
column in $T_{\ell+1}$. That is,  the columns consist of the elements $\beta_{\eta_j} < \beta_{\eta_j+1} <\cdots <
\beta_{\eta_{j+1}-1} <\alpha_{\eta_{j+1}}$, where all the $\alpha_i$ for $\eta_j\leqslant i<\eta_{j+1}$ are in fact 
$\emptyset$, since we are in Case~2\,(a) of Lemma~\ref{lemma.minimaj order} here. 

Next, to show that $\varphi$ is a bijection, we describe the inverse map of $\varphi$.
For $\mathrm{D} = \{d_1,d_1+d_2, \ldots, d_1+d_2+\cdots+d_\ell\} \subseteq \{1,2,\dots,n-1\}$ and $\mathrm{I}
= \{i_1,i_1+i_2,\dots,i_1+i_2+\cdots+i_\ell\} = \{\eta_1,\eta_2,\ldots,\eta_\ell \} \subseteq \{1,2,\dots,k-1\}$ with $\ell$ distinct
elements each, suppose $d_{\ell+1}$ and $i_{\ell+1}$ are such that $d_1+d_2+\cdots+d_{\ell+1} = n$ and 
$\eta_{\ell+1}=i_1+i_2+\cdots+i_{\ell+1}=k$.  Assume
$T_1 \times \cdots \times T_\ell \times T_{\ell+1} \in \mathsf{SSYT}(1^{c_1}) \times \cdots \times \mathsf{SSYT}(1^{c_\ell}) 
\times \mathsf{SSYT}(\gamma)$, where $\gamma = (1^{d_1-i_1},i_2,\dots,i_{\ell+1})$ and $c_j = d_{\ell+2-j} - i_{\ell+2-j}$ 
for $1\leqslant j \leqslant \ell$. We construct $\pi$ by applying the following algorithm.

Read off the bottom $d_1-i_1$ entries of the first column of $T_{\ell+1}$. Let $b_1$ be the element immediately above 
these entries in the first column of $T_{\ell+1}$, and note that $b_1$ is less than all of them. Let $b_2,\dots, b_{i_1}$ be the 
elements in the same row of $T_{\ell+1}$ as $b_1$, reading from left to right.  Assign  $b_{\eta_1+1},\ldots, b_{\eta_2}$ to the 
elements in the next higher row, and so forth, until reaching row $d_1-i_1+\ell+1$ (the top row) of $T_{\ell+1}$ 
and assigning $b_{\eta_\ell+1},\dots, b_{\eta_{\ell+1}}=b_k$ to its entries.
The elements in $\beta_1,\ldots,\beta_{\eta_1-1},\alpha_{\eta_1}$ are obtained by cutting the entries in 
the first column of $T_{\ell+1}$ above $b_1$, so that $\beta_i$ lies in the interval $(b_i, b_{i+1}]$, and  $\alpha_{\eta_1}$ 
lies in the interval  $(b_{\eta_1},\infty)$.

Now for $1\leqslant j\leqslant \ell$, we obtain $\beta_{\eta_j},\beta_{\eta_j+1},\ldots,\beta_{\eta_{j+1}-1},\alpha_{\eta_{j+1}}$ by 
cutting the elements in $T_{\ell+1-j}$ into sequences as follows: $\beta_{\eta_j} = T_{\ell+1-j} \cap ( -\infty, b_{\eta_j+1} ]$, 
$\beta_{\eta_j+m} = T_{\ell+1-j} \cap (b_{\eta_j+m+1}, b_{\eta_j+m+2}]$ and $\alpha_{\eta_{j+1}} = T_{\ell+1-j} \cap 
(b_{\eta_{j+1}},+\infty)$. 

The inequalities are naturally forced from the inequalities in the semistandard tableaux, and the descents at the 
given positions are also forced,  because by construction $\alpha_{\eta_j} > b_{\eta_j} > b_{\eta_j+1} \geqslant \beta_{\eta_j}$. 
This process constructs the $b_i$, $\alpha_i$, and $\beta_i$ for each $i=1,\dots,k$, where we assume that sequences that 
have not been defined by the process are empty.    Then $\varphi^{-1}(T_1 \times T_2 \times \cdots \times T_{\ell+1})
= \pi = (\pi_1 \mid \pi_2 \mid \cdots \mid \pi_{k})$, where $\pi_i = b_i \alpha_i \beta_i$.  
\end{proof}
 
For a (skew) partition $\lambda$, the \defn{Schur function} $\mathsf{s}_\lambda(\mathbf{x})$ is defined as
\begin{equation}
\label{equation.schur}
	\mathsf{s}_\lambda(\mathbf{x}) = \sum_{T \in \mathsf{SSYT}(\lambda)} \mathbf{x}^{\wt(T)}.
\end{equation}
Similarly for $m \geqslant 1$, the \defn{$m$-th elementary symmetric function} $\mathsf{e}_m(\mathbf{x})$ is given by
\[
	\mathsf{e}_m(\mathbf{x}) = \sum_{1 \leqslant j_1 < j_2 < \cdots < j_m} x_{j_1} x_{j_2} \cdots x_{j_m}.
\]
As an immediate consequence of Proposition~\ref{P:biject}, we have the following symmetric function identity.
 
\begin{corollary}
\label{C:symexpand}   
Assume $\mathrm{D} \subseteq \{1,2,\dots,n-1\}$ and $\mathrm{I} \subseteq\{1,2,\dots, k-1\}$ are sets of $\ell$ 
distinct elements each and let $\mathrm{M(D,I)}$, $\gamma$ and $c_j$ for $1\leqslant j \leqslant \ell$ be as in 
Proposition~\ref{P:biject}. Then 
\[
	\sum_{\pi \in \mathrm{M(D,I)}} \mathbf{x}^{\wt(\pi)} 
	= \mathsf{s}_\gamma(\mathbf{x}) \ \prod_{j=1}^\ell \mathsf{e}_{c_j}(\mathbf{x}).
\]
\end{corollary}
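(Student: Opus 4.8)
The plan is to deduce Corollary~\ref{C:symexpand} directly from Proposition~\ref{P:biject} by summing the weight-preserving bijection $\varphi$ over the set $\mathrm{M(D,I)}$, then recognizing each factor on the target side as a Schur or elementary symmetric function via the definitions in~\eqref{equation.schur}. The only genuine content beyond Proposition~\ref{P:biject} is the identification $\sum_{T \in \mathsf{SSYT}(1^c)} \mathbf{x}^{\wt(T)} = \mathsf{e}_c(\mathbf{x})$, which is routine.

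First I would invoke the bijection $\varphi \colon \mathrm{M(D,I)} \to \mathsf{SSYT}(1^{c_1}) \times \cdots \times \mathsf{SSYT}(1^{c_\ell}) \times \mathsf{SSYT}(\gamma)$ from Proposition~\ref{P:biject}, which is weight-preserving in the sense that if $\varphi(\pi) = T_1 \times \cdots \times T_\ell \times T_{\ell+1}$ then $\wt(\pi) = \sum_{j=1}^{\ell+1} \wt(T_j)$, hence $\mathbf{x}^{\wt(\pi)} = \prod_{j=1}^{\ell+1} \mathbf{x}^{\wt(T_j)}$. Summing over $\pi \in \mathrm{M(D,I)}$ and using that $\varphi$ is a bijection, the sum $\sum_{\pi \in \mathrm{M(D,I)}} \mathbf{x}^{\wt(\pi)}$ factors as
\[
	\Bigl( \prod_{j=1}^{\ell} \sum_{T_j \in \mathsf{SSYT}(1^{c_j})} \mathbf{x}^{\wt(T_j)} \Bigr)
	\Bigl( \sum_{T_{\ell+1} \in \mathsf{SSYT}(\gamma)} \mathbf{x}^{\wt(T_{\ell+1})} \Bigr),
\]
where the factorization of the sum over a Cartesian product into a product of sums is the standard distributivity of multiplication over addition.

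Next I would identify the individual factors. By the definition~\eqref{equation.schur} of the Schur function for the (skew) shape $\gamma$, the last factor is exactly $\mathsf{s}_\gamma(\mathbf{x})$. For each $j$, a semistandard filling of the single column $1^{c_j}$ is a strictly increasing sequence $1 \leqslant j_1 < j_2 < \cdots < j_{c_j}$ of entries, and its weight monomial is $x_{j_1} x_{j_2} \cdots x_{j_{c_j}}$; summing over all such fillings gives precisely $\mathsf{e}_{c_j}(\mathbf{x})$ by the stated definition of the elementary symmetric function. (When $c_j = 0$ the column is empty, the unique filling has weight monomial $1$, and $\mathsf{e}_0(\mathbf{x}) = 1$, so the formula still holds; I would note this edge case in passing.) Combining, $\sum_{\pi \in \mathrm{M(D,I)}} \mathbf{x}^{\wt(\pi)} = \mathsf{s}_\gamma(\mathbf{x}) \prod_{j=1}^\ell \mathsf{e}_{c_j}(\mathbf{x})$, as claimed.

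There is essentially no obstacle here: the corollary is labeled "an immediate consequence," and the proof is a one-line application of a weight-preserving bijection followed by unpacking two definitions. The only point requiring a moment's care is the bookkeeping that $\varphi$ distributes the weight additively across the tableau tuple — but this is already guaranteed by the phrase "weight-preserving bijection" in Proposition~\ref{P:biject} and by the observation in its proof that "the entries of $\pi$ are mapped bijectively to the entries of $T_1 \times T_2 \times \cdots \times T_{\ell+1}$." So the entire proof can be presented in a few sentences.
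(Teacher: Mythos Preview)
Your proposal is correct and matches the paper's approach exactly: the paper states the corollary as ``an immediate consequence of Proposition~\ref{P:biject}'' and gives no separate proof, so your expansion---summing the weight-preserving bijection over $\mathrm{M(D,I)}$, factoring over the Cartesian product, and identifying the column factors as $\mathsf{e}_{c_j}$ and the ribbon factor as $\mathsf{s}_\gamma$---is precisely the intended argument.
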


%%%%%%%%%%%%%%%%%%%%%%%%%%%%%%%%%%%%%%%%%%%%%%%%%%%%%%%%%
\section{Crystal on ordered multiset partitions}
\label{section.crystal}
%%%%%%%%%%%%%%%%%%%%%%%%%%%%%%%%%%%%%%%%%%%%%%%%%%%%%%%%%

%%%%%%%%%%%%%%%%%%%%%%%%%%%%%%%%%%%%%%%%%%%%%%%%%%%%%%%%%
\subsection{Crystal structure}

Denote the set of words of length $n$ over the alphabet $\{1,2,\ldots,r\}$ by $\mathcal{W}^{(r)}_n$.
The set $\mathcal{W}_n^{(r)}$ can be endowed with an $\mathfrak{sl}_r$-crystal structure as follows.
The weight $\wt(w)$ of $w\in \mathcal{W}_n^{(r)}$ is the tuple $(a_1,\ldots,a_r)$, where $a_i$ is the number
of letters $i$ in $w$. The \defn{Kashiwara raising} and \defn{lowering operators}
\[
	e_i, f_i \colon \mathcal{W}_n^{(r)} \to \mathcal{W}_n^{(r)} \cup \{0\} \qquad \qquad \text{for $1\leqslant i <r$}
\]
are defined as follows. Associate to each letter $i$ in $w$ an open bracket ``$)$'' and to each letter $i+1$ in $w$
a closed bracket ``$($''. Then $e_i$ changes the $i+1$ associated to the leftmost unmatched ``$($'' to an $i$; if there 
is no such letter, $e_i(w)=0$. Similarly, $f_i$ changes the $i$ associated to the rightmost unmatched ``$)$'' to an $i+1$;
if there is no such letter, $f_i(w)=0$.

For $\lambda$ a (skew) partition, the $\mathfrak{sl}_r$-crystal action on $\mathsf{SSYT}^{(r)}(\lambda)$ is induced
by the crystal on $\mathcal{W}_{|\lambda|}^{(r)}$, where $|\lambda|$ is the number of boxes in $\lambda$. Consider the 
row-reading word $\mathsf{row}(T)$ of $T\in \mathsf{SSYT}^{(r)}(\lambda)$, which is the word obtained from $T$ by 
reading the rows from bottom to top, left to right. Then $f_i(T)$ (resp. $e_i(T)$) is the RSK insertion tableau of
$f_i(\mathsf{row}(T))$ (resp. $e_i(\mathsf{row}(T))$). It is well known that {$f_i(T)$ is a tableau in
$\mathsf{SSYT}^{(r)}(\lambda)$ with weight  equal to} $\wt(T)-\epsilon_i+\epsilon_{i+1}$, where $\epsilon_i$ is $i$-th 
standard vector in $\ZZ^r$.
Similarly, $e_i(T) \in \mathsf{SSYT}^{(r)}(\lambda)$, and $e_i(T)$ has weight $\wt(T)+\epsilon_i-\epsilon_{i+1}$. See for 
example~\cite[Chapter 3]{Bump.Schilling.2017}.

In the same spirit, an $\mathfrak{sl}_r$-crystal structure can be imposed on 
\[
	\mathsf{SSYT}^{(r)}(1^{c_1},\ldots,1^{c_\ell},\gamma)
	:= \mathsf{SSYT}^{(r)}(1^{c_1}) \times \cdots \times \mathsf{SSYT}^{(r)}(1^{c_\ell}) \times \mathsf{SSYT}^{(r)}(\gamma)
\]
by concatenating the reading words of the tableaux in the tuple. This yields crystal operators
\[
	e_i,f_i \colon \mathsf{SSYT}^{(r)}(1^{c_1},\ldots,1^{c_\ell},\gamma) \to 
	\mathsf{SSYT}^{(r)}(1^{c_1},\ldots,1^{c_\ell},\gamma) \cup \{0\}.
\]
Via the bijection $\varphi$ of Proposition~\ref{P:biject}, this also imposes crystal operators on ordered
multiset partitions
\[
	\tilde{e}_i,\tilde{f}_i \colon \mathcal{OP}_{n,k}^{(r)} \to \mathcal{OP}_{n,k}^{(r)} \cup \{0\}
\]
as $\tilde{e}_i = \varphi^{-1} \circ e_i \circ \varphi$ and $\tilde{f}_i = \varphi^{-1} \circ f_i \circ \varphi$.

An example of a crystal structure on $\mathcal{OP}_{n,k}^{(r)}$ is given in Figure~\ref{figure.crystal}.

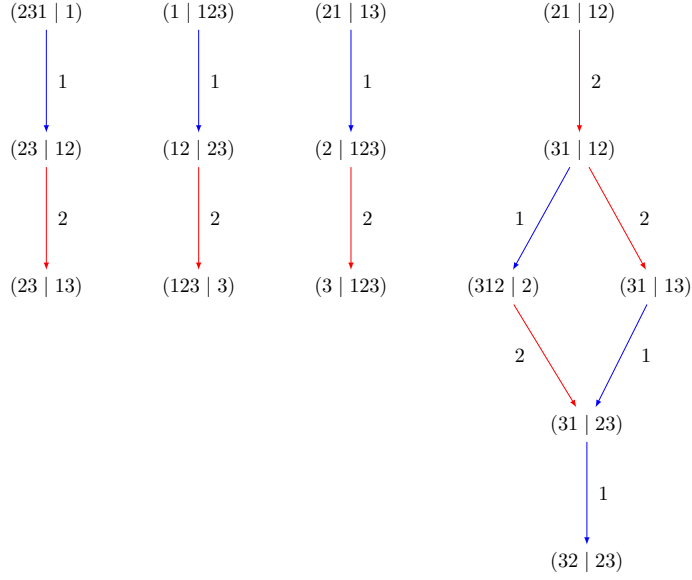
\begin{figure}
\scalebox{0.7}{
\begin{tikzpicture}[>=latex,line join=bevel,]
\node (node_13) at (323.0bp,9.0bp) [draw,draw=none] {$\left(32 \mid 23\right)$};
  \node (node_14) at (114.0bp,305.0bp) [draw,draw=none] {$\left(1 \mid 123\right)$};
  \node (node_9) at (319.0bp,231.0bp) [draw,draw=none] {$\left(31\mid 12\right)$};
  \node (node_8) at (32.0bp,231.0bp) [draw,draw=none] {$\left(23 \mid 12\right)$};
  \node (node_7) at (360.0bp,157.0bp) [draw,draw=none] {$\left(31 \mid 13\right)$};
  \node (node_6) at (196.0bp,231.0bp) [draw,draw=none] {$\left(2\mid 123\right)$};
  \node (node_5) at (278.0bp,157.0bp) [draw,draw=none] {$\left(312\mid 2\right)$};
  \node (node_4) at (114.0bp,231.0bp) [draw,draw=none] {$\left(12\mid 23\right)$};
  \node (node_3) at (32.0bp,157.0bp) [draw,draw=none] {$\left(23 \mid 13\right)$};
  \node (node_2) at (196.0bp,157.0bp) [draw,draw=none] {$\left(3\mid 123\right)$};
  \node (node_1) at (114.0bp,157.0bp) [draw,draw=none] {$\left(123 \mid 3\right)$};
  \node (node_0) at (32.0bp,305.0bp) [draw,draw=none] {$\left(231\mid 1\right)$};
  \node (node_11) at (319.0bp,305.0bp) [draw,draw=none] {$\left(21 \mid 12\right)$};
  \node (node_10) at (323.0bp,83.0bp) [draw,draw=none] {$\left(31\mid 23\right)$};
  \node (node_12) at (196.0bp,305.0bp) [draw,draw=none] {$\left(21\mid 13\right)$};
  \draw [blue,->] (node_10) ..controls (323.0bp,62.872bp) and (323.0bp,42.801bp)  .. (node_13);
  \definecolor{strokecol}{rgb}{0.0,0.0,0.0};
  \pgfsetstrokecolor{strokecol}
  \draw (332.0bp,46.0bp) node {$1$};
  \draw [red,->] (node_6) ..controls (196.0bp,210.87bp) and (196.0bp,190.8bp)  .. (node_2);
  \draw (205.0bp,194.0bp) node {$2$};
  \draw [red,->] (node_9) ..controls (330.34bp,210.54bp) and (341.93bp,189.61bp)  .. (node_7);
  \draw (354.0bp,194.0bp) node {$2$};
  \draw [red,->] (node_8) ..controls (32.0bp,210.87bp) and (32.0bp,190.8bp)  .. (node_3);
  \draw (41.0bp,194.0bp) node {$2$};
  \draw [blue,->] (node_9) ..controls (307.66bp,210.54bp) and (296.07bp,189.61bp)  .. (node_5);
  \draw (287.0bp,194.0bp) node {$1$};
  \draw [red,->] (node_11) ..controls (319.0bp,284.87bp) and (319.0bp,264.8bp)  .. (node_9);
  \draw (328.0bp,268.0bp) node {$2$};
  \draw [red,->] (node_5) ..controls (290.44bp,136.54bp) and (303.17bp,115.61bp)  .. (node_10);
  \draw (287.0bp,120.0bp) node {$2$};
  \draw [blue,->] (node_0) ..controls (32.0bp,284.87bp) and (32.0bp,264.8bp)  .. (node_8);
  \draw (41.0bp,268.0bp) node {$1$};
  \draw [red,->] (node_4) ..controls (114.0bp,210.87bp) and (114.0bp,190.8bp)  .. (node_1);
  \draw (123.0bp,194.0bp) node {$2$};
  \draw [blue,->] (node_12) ..controls (196.0bp,284.87bp) and (196.0bp,264.8bp)  .. (node_6);
  \draw (205.0bp,268.0bp) node {$1$};
  \draw [blue,->] (node_7) ..controls (349.82bp,136.65bp) and (339.51bp,116.01bp)  .. (node_10);
  \draw (355.0bp,120.0bp) node {$1$};
  \draw [blue,->] (node_14) ..controls (114.0bp,284.87bp) and (114.0bp,264.8bp)  .. (node_4);
  \draw (123.0bp,268.0bp) node {$1$};
\end{tikzpicture}
}
\caption{The crystal structure on $\mathcal{OP}_{4,2}^{(3)}$. The $\mm$ of the connected components
are $2,0,1,1$ from left to right.
\label{figure.crystal}}
\end{figure}

\begin{theorem}
\label{theorem.crystal}
The operators $\tilde{e}_i, \tilde{f}_i$, and $\wt$ impose an $\mathfrak{sl}_r$-crystal structure on
$\mathcal{OP}_{n,k}^{(r)}$. In addition, $\tilde{e}_i$ and $\tilde{f}_i$ preserve the $\mm$ statistic.
\end{theorem}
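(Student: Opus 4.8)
The plan is to reduce the statement to the bijection $\varphi$ of Proposition~\ref{P:biject} together with the standard fact that tensor products of tableau crystals satisfy the crystal axioms. First I would note that $\mathcal{OP}_{n,k}^{(r)}$ decomposes as the disjoint union $\bigsqcup_{\mathrm{D},\mathrm{I}} \mathrm{M(D,I)}^{(r)}$ over all admissible pairs, since each $\pi$ determines $\De(\pi)$ together with the blocks in which its descents occur. On each piece, Proposition~\ref{P:biject} provides a weight-preserving bijection $\varphi\colon\mathrm{M(D,I)}^{(r)}\to\mathsf{SSYT}^{(r)}(1^{c_1},\ldots,1^{c_\ell},\gamma)$, and the target carries an $\mathfrak{sl}_r$-crystal structure via concatenation of row-reading words, i.e.\ the tensor product of the crystals on the individual factors. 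Transporting this structure through $\varphi$ and taking the disjoint union over all $(\mathrm{D},\mathrm{I})$ defines $\tilde e_i,\tilde f_i$ on $\mathcal{OP}_{n,k}^{(r)}$. The crystal axioms are then inherited: they hold on the tensor product of tableau crystals by standard theory (e.g.~\cite[Chapter~3]{Bump.Schilling.2017}), and because $\varphi$ is a bijection, the equivalence $f_i(T)=T'\iff e_i(T')=T$ transports to $\tilde f_i$ and $\tilde e_i$; since $\varphi$ preserves $\wt$ and $f_i$ shifts weight by $-\epsilon_i+\epsilon_{i+1}$, the weight conditions also carry over. This gives the first assertion.

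For the second assertion, the key observation is that the tensor product crystal operators $e_i,f_i$ preserve the shape of each tableau in a tuple, so they map $\mathsf{SSYT}^{(r)}(1^{c_1},\ldots,1^{c_\ell},\gamma)$ into itself together with $\{0\}$. Hence $\tilde e_i$ and $\tilde f_i$ send each $\mathrm{M(D,I)}^{(r)}$ into $\mathrm{M(D,I)}^{(r)}\cup\{0\}$; in particular $\De(\tilde e_i(\pi))=\De(\pi)$ whenever $\tilde e_i(\pi)\neq0$, and likewise for $\tilde f_i$. Since by~\eqref{equation.minimaj} the value $\mm(\pi)=\sum_{d\in\De(\pi)}d$ depends only on $\De(\pi)$, the statistic $\mm$ is constant on each $\mathrm{M(D,I)}^{(r)}$, so $\mm(\tilde e_i(\pi))=\mm(\pi)=\mm(\tilde f_i(\pi))$ whenever these operators do not return $0$.

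There is no serious obstacle here; the proof is mostly bookkeeping. The two facts doing the work are Proposition~\ref{P:biject} (that $\varphi$ is a bijection onto the \emph{entire} product of tableau sets, so that the transported operators are everywhere defined) and the tensor product rule (that the concatenated-reading-word crystal keeps each factor inside its own shape class, which is precisely what forces $\De$, hence $\mm$, to be invariant). The only point requiring a moment's care is that the globally defined operators $\tilde e_i=\varphi^{-1}\circ e_i\circ\varphi$ agree with the piecewise ones, which is immediate once one observes that $\varphi$ maps distinct pieces $\mathrm{M(D,I)}^{(r)}$ into disjoint target sets.
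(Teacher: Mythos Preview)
Your proposal is correct and follows essentially the same approach as the paper's proof: transport the crystal structure through the weight-preserving bijection $\varphi$, then observe that since the crystal operators on tableau tuples preserve shapes, $\varphi^{-1}\circ e_i\circ\varphi$ and $\varphi^{-1}\circ f_i\circ\varphi$ keep each $\mathrm{M(D,I)}^{(r)}$ invariant, hence preserve $\De(\pi)$ and therefore $\mm(\pi)$. The paper's proof is extremely terse (two sentences), while you have spelled out the decomposition $\mathcal{OP}_{n,k}^{(r)}=\bigsqcup_{\mathrm{D},\mathrm{I}}\mathrm{M(D,I)}^{(r)}$ and the disjointness of the target shape classes more carefully, but the content is the same.
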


\begin{proof}
The operators $\tilde{e}_i, \tilde{f}_i$, and $\wt$ impose an $\mathfrak{sl}_r$-crystal structure by construction
since $\varphi$ is a weight-preserving bijection. The Kashiwara operators $\tilde{e}_i$ and $\tilde{f}_i$ preserve
the $\mm$ statistic, since by Proposition~\ref{P:biject}, the bijection $\varphi$ restricts to $\mathrm{M(D,I)}^{(r)}$ 
which fixes the descents of the ordered multiset partitions in minimaj order.
\end{proof}

%%%%%%%%%%%%%%%%%%%%%%%%%%%%%%%%%%%%%%%%%%%%%%%%%%%%%%%%%
\subsection{Explicit crystal operators}

Let us now write down the crystal operator $\tilde{f}_i \colon \mathcal{OP}_{n,k} \to \mathcal{OP}_{n,k}$ 
of Theorem~\ref{theorem.crystal} explicitly on $\pi\in \mathcal{OP}_{n,k}$ in minimaj order.

Start by creating a word $w$ from right to left by reading the first element in each block of $\pi$ from right to left, 
followed by the remaining elements of $\pi$ from left to right. Note that this agrees with $\mathsf{row}(\varphi(\pi))$.
For example, $w=513165421434212$ for $\pi$ in Example~\ref{example.pi phi}.
Use the crystal operator $f_i$ on words to determine 
which $i$ in $w$ to change to an $i+1$. Circle the corresponding letter $i$ in $\pi$. The crystal operator $\tilde{f}_i$ 
on $\pi$ changes the circled $i$ to $i+1$ unless we are in one of the following two cases:
\begin{subequations}
\label{equation.f explicit}
\begin{align}
\label{equation.f explicit a}
    	\cdots \encircle{i} \mid i &\quad \stackrel{\tilde{f}_i}{\longrightarrow} \quad \cdots \mid i \; \encircle{i\!\!+\!\!1} \;, \\
\label{equation.f explicit b}
	\mid \encircle{i} \; i\!\!+\!\!1 & \quad \stackrel{\tilde{f}_i}{\longrightarrow} \quad i\!\!+\!\!1 \mid \encircle{i\!\!+\!\!1} \;.
\end{align}
\end{subequations}
Here ``$\cdots$'' indicates that the block is not empty in this region.

\begin{example}
In Figure~\ref{figure.crystal}, $\tilde{f}_2(31 \encircle{2} \mid 2) = (31 \mid 2 \; \encircle{3})$ is an example
of~\eqref{equation.f explicit a}. Similarly, $\tilde{f}_1(31 \mid \encircle{1}\;2) = (312 \mid \encircle{2})$ is an example
of~\eqref{equation.f explicit b}.
\end{example}

\begin{proposition}
The above explicit description for $\tilde{f}_i$ is well defined and agrees with the definition of
Theorem~\ref{theorem.crystal}.
\end{proposition}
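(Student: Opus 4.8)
The plan is to trace, step by step, how the row-reading word $\mathsf{row}(\varphi(\pi))$ is built from $\pi$ and to compare the effect of $f_i$ on that word with the effect of the proposed rule on $\pi$ itself. First I would observe that by the recipe in Proposition~\ref{P:biject} together with the picture in \eqref{equation.T picture}, the concatenation of the reading words of $T_1,\dots,T_\ell,T_{\ell+1}$ is exactly the word $w$ described in this subsection: reading $T_1,\dots,T_\ell$ (each a single column, read bottom-to-top) picks up the entries strictly between consecutive descents in decreasing-from-the-bottom order, while the row-reading of the ribbon $T_{\ell+1}$ produces first the first column (the $\beta$'s and $\alpha_{\eta_1}$ from the bottom), then the successive rows $b_{\eta_{j-1}+1},\dots,b_{\eta_j}$ bottom-to-top. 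Carefully bookkeeping the overlap at the corner boxes, this is precisely ``the first element of each block read right-to-left, then the remaining elements read left-to-right.'' Hence $\mathsf{row}(\varphi(\pi))=w$, which justifies circling the letter of $\pi$ corresponding to the letter that $f_i$ on words would change.

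Next I would split into cases according to \emph{which} tableau the circled letter lives in. If the circled $i$ is an entry of one of the column tableaux $T_j$ (equivalently, the circled $i$ is \emph{not} a first element $b_m$ of a block of $\pi$, and lies in the $\alpha$- or $\beta$-part), then $f_i$ simply replaces that $i$ by $i+1$ in $T_j$; one checks the result is still column-strict (the bracketing rule guarantees the letter below, if $i+1$ before, was $\geqslant i+2$... actually one must invoke the standard fact quoted in the excerpt that $f_i$ preserves $\mathsf{SSYT}^{(r)}(\lambda)$), and pulling back through $\varphi^{-1}$ just changes the corresponding entry of $\pi$ from $i$ to $i+1$ in place. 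This is the generic case and matches the ``changes the circled $i$ to $i+1$'' clause. If the circled $i$ is an entry $b_m$ in the interior of a row of $T_{\ell+1}$, the same ``change in place'' conclusion holds: the only subtlety is whether $b_m$ could be forced out of its block, and the bracketing rule plus the descent-preservation from Theorem~\ref{theorem.crystal} prevent that.

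The remaining, and genuinely delicate, case is when the circled $i$ is a $b_m$ sitting at a \emph{corner} box of the ribbon $T_{\ell+1}$ — i.e.\ it is the last entry $b_{\eta_j}$ of its row (the bottom of the next column up), or it is $b_1$ at the junction with the first column — because there changing $i$ to $i+1$ in the tableau can violate column-strictness and force an RSK re-insertion, which in turn shifts an entry between the column and the row, i.e.\ between $\alpha_{\eta_j}$ (or $\beta_{\eta_j}$) and the $b$-row. Unwinding $\varphi^{-1}$ on this re-inserted tableau is exactly what produces the two exceptional moves \eqref{equation.f explicit a} and \eqref{equation.f explicit b}: move~(a) is the case where $b_m=\encircle{i}$ is the last entry of a block and the block to its right begins with $i$, so the $i+1$ gets pushed past the bar into that block; move~(b) is the case where $\encircle{i}=b_m$ begins its block and is immediately followed inside the block by $i+1$, so after the change the $i+1$'s must reorganize and the block boundary moves. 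I expect this corner analysis — correctly identifying, via the bracketing of $w$ and the ribbon shape, exactly when the circled letter is a critical $b_m$, and then checking that the RSK bumping on the ribbon corresponds term-for-term to the displayed local moves on $\pi$ — to be the main obstacle; the rest is bookkeeping that follows from $\mathsf{row}(\varphi(\pi))=w$ and the already-established properties of $\varphi$ and of the tableau crystal. Finally I would note that $\tilde e_i$ is handled symmetrically, or simply as the inverse of $\tilde f_i$, completing the verification that the explicit description is well defined and agrees with Theorem~\ref{theorem.crystal}.
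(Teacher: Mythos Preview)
Your opening observation that $w=\mathsf{row}(\varphi(\pi))$ is correct and matches the paper. The gap is in your case analysis. You assert that the exceptional moves \eqref{equation.f explicit a} and \eqref{equation.f explicit b} both arise when the circled $i$ is some $b_m$ at a corner box of the ribbon and that ``RSK re-insertion'' reshuffles entries between a column and a row. This misidentifies the mechanism. The crystal operator $f_i$ preserves the shape of every factor $T_j$ (as the excerpt already recalls), so no bumping or shape change ever occurs on the tableau side; the letter simply changes from $i$ to $i+1$ in place inside its tableau. What moves the letter on the $\pi$ side is the \emph{redistribution step in $\varphi^{-1}$}: the column entries are cut into the blocks $\beta_{\eta_j},\dots,\alpha_{\eta_{j+1}}$ according to the intervals $(b_m,b_{m+1}]$, and changing an entry across one of these thresholds reassigns it to a different block.

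Concretely, in case \eqref{equation.f explicit a} the circled $i$ is \emph{not} any $b_m$ --- the ``$\cdots$'' forces it to lie in $\alpha_j$ or $\beta_j$, hence in a column tableau (or the first column of the ribbon) --- and after becoming $i+1$ it exceeds $b_{j+1}=i$, so $\varphi^{-1}$ now places it in $\pi_{j+1}$ rather than $\pi_j$. Your claim that the column-tableau case is always the generic ``change in place'' case is therefore false: case~(a) is precisely its failure. In case \eqref{equation.f explicit b} the circled $i$ \emph{is} $b_j$, but the entry that moves is the adjacent $i+1\in\alpha_j\cup\beta_j$ living in a column; once $b_j$ becomes $i+1$, that $i+1$ no longer satisfies $>b_j$ and $\varphi^{-1}$ reassigns it to $\pi_{j-1}$. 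You should also record, as the paper does, why the target block cannot already contain $i+1$ (bracketing would have paired the circled $i$). Reworking your cases along these lines --- tracking the interval cuts in $\varphi^{-1}$ rather than hypothetical corner bumps --- gives a correct argument.
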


\begin{proof}
The word $w$ described above is precisely $\mathsf{row}(\varphi(\pi))$ on which $f_i$ acts.
Hence the circled letter $i$ is indeed the letter changed to $i+1$. It remains to check how $\varphi^{-1}$
changes the blocks. We will demonstrate this for the cases in~\eqref{equation.f explicit} as the other cases are similar.

In case \eqref{equation.f explicit a} the circled letter $i$ in block $\pi_j$ does not correspond to $b_j$ in $\pi_j$ as it is not at
the beginning of its block. Hence, it belongs to $\alpha_j$ or $\beta_j$. The circled letter is not a descent.
Changing it to $i+1$ would create a descent. The map $\varphi^{-1}$ distributes the letters in $\alpha_j$
and $\beta_j$ to preserve descents, hence the circled $i$ moves over to the next block on the right and becomes a 
circled $i+1$. Note also that $i+1 \not \in \pi_{j+1}$, since otherwise the circled $i$ would have been bracketed in $w$, 
contradicting the fact that $f_i$ is acting on it.

In case \eqref{equation.f explicit b} the circled letter $i$ in block $\pi_j$ corresponds to $b_j$ in $\pi_j$. Again,
$\varphi^{-1}$ now associates the $i+1 \in \pi_j$ to the previous block after applying $f_i$. Note that 
$i+1 \not \in \pi_{j-1}$ since it would necessarily be $b_{j-1}$. But then the circled $i$ would have been bracketed
in $w$, contradicting the fact that $f_i$ is acting on it.
\end{proof}

%%%%%%%%%%%%%%%%%%%%%%%%%%%%%%%%%%%%%%%%%%%%%%%%%%%%%%%%%
\subsection{Schur expansion}

The \defn{character} of an $\mathfrak{sl}_r$-crystal $B$ is defined as
\[
	\mathrm{ch} B = \sum_{b\in B} \mathbf{x}^{\wt(b)}.
\]
Denote by $B(\lambda)$ the $\mathfrak{sl}_\infty$-crystal on $\mathsf{SSYT}(\lambda)$ defined above.
This is a connected highest weight crystal with highest weight $\lambda$, and the character is the Schur 
function $\mathsf{s}_\lambda(\mathbf{x})$ defined in~\eqref{equation.schur}
\[
	\mathrm{ch}B(\lambda) = \mathsf{s}_\lambda(\mathbf{x}).
\]
Similarly, denoting by $B^{(r)}(\lambda)$ the $\mathfrak{sl}_r$-crystal on $\mathsf{SSYT}^{(r)}(\lambda)$,
its character is the Schur polynomial
\[
	\mathrm{ch}B^{(r)}(\lambda) = \mathsf{s}_\lambda(x_1,\ldots,x_r).
\]
Let us define
\[
	\mathsf{Val}^{(r)}_{n,k}(\mathbf{x};0,t) = \sum_{\pi\in \mathcal{OP}^{(r)}_{n,k+1}}  t^{\mm(\pi)} \mathbf{x}^{\wt(\pi)},
\]
which satisfies $\mathsf{Val}_{n,k}(\mathbf{x};0,t) = \mathsf{Val}_{n,k}^{(r)}(\mathbf{x};0,t)$ for $r\geqslant n$,
where $\mathsf{Val}_{n,k}(\mathbf{x};0,t)$ is as in~\eqref{equation.val}.

As a consequence of Theorem~\ref{theorem.crystal}, we now obtain the Schur expansion of 
$\mathsf{Val}_{n,k}^{(r)}(\mathbf{x};0,t)$.

\begin{corollary}
We have
\[
	\mathsf{Val}_{n,k-1}^{(r)}(\mathbf{x};0,t) 
	= \sum_{\substack{\pi \in \mathcal{OP}^{(r)}_{n,k}\\ \tilde{e}_i(\pi) = 0 \;\; \forall \;1\leqslant i <r}} t^{\mm(\pi)} 
	\mathsf{s}_{\wt(\pi)}.
\]
\end{corollary}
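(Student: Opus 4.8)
The plan is to read the desired Schur expansion directly off the crystal structure of Theorem~\ref{theorem.crystal}. Unwinding the definition of $\mathsf{Val}^{(r)}_{n,k-1}$ gives $\mathsf{Val}^{(r)}_{n,k-1}(\mathbf{x};0,t) = \sum_{\pi \in \mathcal{OP}^{(r)}_{n,k}} t^{\mm(\pi)}\mathbf{x}^{\wt(\pi)}$, so the task is to regroup the right-hand side according to the connected components of the $\mathfrak{sl}_r$-crystal on $\mathcal{OP}^{(r)}_{n,k}$. Since the Kashiwara operators $\tilde e_i,\tilde f_i$ preserve $\mm$ (Theorem~\ref{theorem.crystal}), the exponent of $t$ is constant on each connected component $C$; writing $\mm(C)$ for this common value, we obtain $\mathsf{Val}^{(r)}_{n,k-1}(\mathbf{x};0,t) = \sum_{C} t^{\mm(C)}\,\mathrm{ch}\,C$, the sum running over connected components $C$ of $\mathcal{OP}^{(r)}_{n,k}$.

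Next I would identify each character $\mathrm{ch}\,C$ with a Schur polynomial. Via the bijection $\varphi$ of Proposition~\ref{P:biject}, which is weight-preserving and by construction intertwines $e_i$ with $\tilde e_i$, the crystal on $\mathcal{OP}^{(r)}_{n,k}$ is the crystal on $\bigsqcup_{\mathrm{D},\mathrm{I}} \mathsf{SSYT}^{(r)}(1^{c_1},\ldots,1^{c_\ell},\gamma)$, which is a tensor product of the SSYT-crystals $B^{(r)}(1^{c_j})$ and $B^{(r)}(\gamma)$ with $\gamma$ a hook or skew ribbon shape. These are normal $\mathfrak{sl}_r$-crystals, and a tensor product of normal crystals is normal; hence every connected component $C$ is isomorphic to $B^{(r)}(\lambda_C)$ for a unique partition $\lambda_C$ with at most $r$ rows, namely the highest weight of the unique vertex $u_C \in C$ satisfying $e_i(u_C)=0$ for all $1\leqslant i<r$, and $\mathrm{ch}\,C = \mathrm{ch}\,B^{(r)}(\lambda_C) = \mathsf{s}_{\lambda_C}(x_1,\ldots,x_r)$ (see~\cite[Chapter 3]{Bump.Schilling.2017}). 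Here I would invoke the standard facts that a normal crystal is a disjoint union of the highest-weight crystals $B^{(r)}(\lambda)$, that $B^{(r)}(\lambda)$ has a unique source, and that this source has weight $\lambda$.

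Finally I would translate the component sum back into the sum appearing in the corollary. Transporting the highest-weight vertices $u_C$ along $\varphi^{-1}$, they correspond bijectively to the ordered multiset partitions $\pi \in \mathcal{OP}^{(r)}_{n,k}$ with $\tilde e_i(\pi)=0$ for all $1\leqslant i<r$, and for such a $\pi$ one has $\wt(\pi)=\lambda_{C(\pi)}$ and $\mm(\pi)=\mm(C(\pi))$. Substituting $\mm(C)=\mm(\pi)$, $\lambda_C=\wt(\pi)$, and $\mathrm{ch}\,C=\mathsf{s}_{\wt(\pi)}$ into $\sum_C t^{\mm(C)}\,\mathrm{ch}\,C$ yields exactly $\sum_{\pi \,:\, \tilde e_i(\pi)=0\ \forall i} t^{\mm(\pi)}\,\mathsf{s}_{\wt(\pi)}$, as claimed. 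The only point requiring care — the main, and mild, obstacle — is the normality assertion: one must note that the crystal on a tensor product of SSYT-crystals of (skew) shapes is again a disjoint union of honest highest-weight crystals $B^{(r)}(\lambda)$. This is classical, but worth stating explicitly since $\gamma$ is in general only a skew ribbon and the factor $\mathsf{SSYT}^{(r)}(\gamma)$ is itself typically disconnected.
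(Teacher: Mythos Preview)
Your proposal is correct and is exactly the standard crystal-theoretic argument that the paper relies on implicitly: the paper states this result as an immediate corollary of Theorem~\ref{theorem.crystal} without giving a separate proof, and your write-up simply spells out the details (decomposing into connected components, using constancy of $\mm$, and invoking normality so that each component contributes a Schur polynomial indexed by the weight of its unique highest-weight element).
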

When $r\geqslant n$, then by~\cite{Wilson.2016} and~\cite[Proposition 3.18]{Rhoades.2016} this is also equal to
\[
	\mathsf{Val}_{n,k-1}(\mathbf{x};0,t) = \sum_{\lambda \vdash n} \;\; \sum_{T \in \mathsf{SYT}(\lambda)}
	t^{\maj(T) + \binom{n-k}{2} -(n-k) \mathsf{des}(T)} \left[ \begin{array}{c} \mathsf{des}(T)\\ n-k \end{array} \right]
	\mathsf{s}_\lambda(\mathbf{x}),
\]
where $\mathsf{SYT}(\lambda)$ is the set of standard Young tableaux of shape $\lambda$ (that is, the elements in
$\mathsf{SSYT}(\lambda)$ of weight $(1^{|\lambda|})$), $\mathsf{des}(T)$ is the number of descents of $T$,
$\maj(T)$ is the major index of $T$ (or the sum of descents of $T$), and the $t$-binomial coefficients in the sum
are defined using the rule
\[
	 \left[ \begin{array}{c} m \\ p \end{array} \right] = \frac{[m]!}{[p]!\ [m-p]!} \ \ \text{where $[p]! = [p][p-1] \cdots [2][1]$
	and \  $[p] = 1 + t + \cdots + t^{p-1}$}.
\]

\begin{example}
The crystal $\mathcal{OP}_{4,2}^{(3)}$, displayed in Figure~\ref{figure.crystal}, has four highest weight elements
with weights $(2,1,1)$, $(2,1,1)$, $(2,1,1)$, $(2,2)$ from left to right. Hence, we obtain the Schur expansion
\[
	\mathsf{Val}^{(3)}_{4,1}(\mathbf{x};0,t) = (1+t+t^2)\; \mathsf{s}_{(2,1,1)}(\mathbf{x}) + t \;\mathsf{s}_{(2,2)}(\mathbf{x}).
\]
\end{example}

%%%%%%%%%%%%%%%%%%%%%%%%%%%%%%%%%%%%%%%%%%%%%%%%%%%%%%%%%
\section{Equidistributivity of the minimaj and maj statistics}
\label{section.equi}
%%%%%%%%%%%%%%%%%%%%%%%%%%%%%%%%%%%%%%%%%%%%%%%%%%%%%%%%%

In this section, we describe a bijection $\psi \colon \calOP_{n,k} \to \calOP_{n,k}$ in Theorem~\ref{theorem.bij OP}
with the property that $\mm(\pi) = \maj(\psi(\pi))$ for $\pi \in \calOP_{n,k}$. This proves the link between
$\mm$ and $\maj$ that was missing in~\cite{Wilson.2016}.
We can interpret $\psi$ as a crystal isomorphism, where $\calOP_{n,k}$ on the left is the $\mm$ crystal of
Section~\ref{section.crystal} and $\calOP_{n,k}$ on the right is viewed as a crystal of $k$ columns with elements
written in major index order.

The bijection $\psi$ is the composition of $\varphi$ of Proposition~\ref{P:biject} with a certain shift operator.
When applying $\varphi$ to $\pi \in \calOP_{n,k}$, we obtain the tuple $T^\bullet=T_1 \times \cdots \times T_{\ell+1}$ 
in~\eqref{equation.T picture}.
We would like to view each column in the tuple of tableaux as a block of a new ordered multiset partition. However, note
that some columns could be empty, namely if $c_j=d_{\ell+2-j}-i_{\ell+2-j}$ in Proposition~\ref{P:biject} is zero for some 
$1\leqslant j \leqslant \ell$. For this reason, let us introduce the set of \defn{weak ordered multiset partitions} 
$\mathcal{WOP}_{n,k}$, where we relax the condition that all blocks need to be nonempty sets.

Let $T^\bullet = T_1 \times \cdots \times T_{\ell+1}$ be a tuple of skew tableaux. Define $\read(T^\bullet)$ to be the 
weak ordered multiset partition whose blocks are obtained from $T^\bullet$ by reading the columns from the 
left to the right and from the bottom to the top; each column constitutes one of the blocks in $\read(T^\bullet)$.
Note that given $\pi= (\pi_1 | \pi_2| \cdots | \pi_k) \in \calOP_{n,k}$ in minimaj order, $\read(\varphi(\pi))$ is a weak
ordered multiset partition in major index order.

\begin{example} 
\label{example.pi ex}
Let $\pi = (1\mid 56.\mid 4.\mid 37.12\mid 2.1\mid 1\mid 34) \in \calOP_{13,7}$, written in minimaj order. 
We have $\mm(\pi)=22$. Then 
\ytableausetup{boxsize=1.1em}
\[
	T^\bullet = \varphi(\pi) =  
	\ytableaushort{1,4} \times  \ytableaushort{1,2} 
	\times \ytableaushort{7} \times \emptyset \times 
	\ytableaushort{\none 13,\none 2,\none 3,\none 4,15,6}
\]
and $\pi'=\read(T^\bullet) = (4.1\mid 2.1\mid 7.\mid \emptyset \mid 6.1\mid 5.4.3.2.1 \mid 3)$.
\end{example}

\begin{lemma}
\label{lem.majproperties}
Let $\mathcal{I}=\{\read(\varphi(\pi)) \mid \pi \in \calOP_{n,k}\} \subseteq \mathcal{WOP}_{n,k}$,
$\pi' = \read(\varphi(\pi)) \in \mathcal{I}$, and $b_i$ the first elements in each block of $\pi$ in minimaj order as
in Lemma~\ref{lemma.minimaj order}. Then $\pi'$ has the following properties:  
\begin{enumerate}
\item The last $k$ elements of $\pi'$ are $b_1,\ldots,b_k$, and $b_i$ and $b_{i+1}$ are in different blocks if and only 
if $b_i \leqslant b_{i+1}$.
\item If $b_1,\ldots,b_k$ are contained in precisely $k-j$ blocks, then there are at least $j$ descents in the blocks 
containing the $b_i$'s. 
\end{enumerate}
\end{lemma}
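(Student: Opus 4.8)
The plan is to extract both properties directly from the explicit description of $\varphi$ in Proposition~\ref{P:biject}, in particular from the picture in~\eqref{equation.T picture}. First I would recall that under $\varphi$, the first entry $b_i$ of each block $\pi_i$ is placed into a non-leftmost column of the ribbon tableau $T_{\ell+1}$: specifically, row $d_1-i_1+j$ of $T_{\ell+1}$ contains exactly $b_{\eta_{j-1}+1},\dots,b_{\eta_j}$ reading left to right, for $1\leqslant j\leqslant\ell+1$, where $\eta_j=i_1+\cdots+i_j$. Since $\read$ reads the columns of $T^\bullet$ left to right and bottom to top, and the column tableaux $T_1,\dots,T_\ell$ contain no $b_i$'s at all (they consist of $\beta$'s and $\alpha$'s only), the $b_i$'s are exactly the last $k$ letters of $\pi'=\read(\varphi(\pi))$, and they appear in the order $b_1,b_2,\dots,b_k$ because within each row of $T_{\ell+1}$ the indices increase left to right, and the rows are read from the bottom (row $d_1-i_1+1$, containing $b_1,\dots,b_{\eta_1}$) upward to the top (row $d_1-i_1+\ell+1$, containing $b_{\eta_\ell+1},\dots,b_k$). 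This already pins down the first half of property~(1).

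For the rest of property~(1), I would analyze when $b_i$ and $b_{i+1}$ land in the same column of $T_{\ell+1}$ versus different columns. They lie in the same column of $T^\bullet$ exactly when they are vertically adjacent in $T_{\ell+1}$, i.e.\ when $b_i$ is the last entry of some row $d_1-i_1+j$ and $b_{i+1}$ is the first entry of the row $d_1-i_1+j+1$ directly above it; this happens precisely when $i=\eta_j$ for some $j$, i.e.\ when $i\in\mathrm{I}$, meaning a descent occurs in block $\pi_i$. By the analysis following Lemma~\ref{lemma.minimaj order} (Case~2(b)), a descent in block $\pi_i$ forces $b_i>b_{i+1}$; conversely, if $i\notin\mathrm{I}$ then $b_i$ and $b_{i+1}$ are horizontally adjacent in the same row of $T_{\ell+1}$, hence in different columns, hence in different blocks of $\pi'$, and the semistandard (weakly increasing row) condition — equivalently Case~2(a) — gives $b_i\leqslant b_{i+1}$. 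Since $\read$ makes each column of $T_{\ell+1}$ into one block, "$b_i$ and $b_{i+1}$ in different blocks" is literally "$b_i,b_{i+1}$ in different columns of $T_{\ell+1}$", so this is exactly the claimed equivalence.

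Property~(2) then follows by a counting argument. Suppose the $k$ letters $b_1,\dots,b_k$ occupy precisely $k-j$ blocks of $\pi'$. By property~(1), the number of pairs $(b_i,b_{i+1})$ sitting in the \emph{same} block equals $k-(k-j)=j$, and each such coincidence is exactly an index $i\in\mathrm{I}$, i.e.\ a block $\pi_i$ of $\pi$ carrying a descent. Each such $\pi_i$ contributes a descent inside its corresponding column-block of $\pi'$ (the descent being, after the major-index ordering, between the bottom entry $b_i$ of one row and the first entry $b_{i+1}$ of the next), so the blocks containing the $b_i$'s carry at least $j$ descents. (In fact one gets exactly $j$ such "$b$-descents", but the statement only asks for "at least $j$".) The main obstacle I anticipate is purely bookkeeping: keeping the row-numbering convention of $\gamma$ straight (rows numbered bottom-to-top, filled from row $d_1-i_1+1$ up to $d_1-i_1+\ell+1$) and matching "consecutive $b$'s in the same column" correctly with "$i\in\mathrm{I}$", together with confirming that the ribbon (no $2\times 2$ block) shape guarantees each pair $b_i,b_{i+1}$ is either horizontally or vertically adjacent and never diagonally separated; once that is carefully set up, both parts are immediate.
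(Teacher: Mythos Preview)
Your proof is correct and follows essentially the same approach as the paper's: both extract properties~(1) and~(2) directly from the explicit form of $\varphi(\pi)$ in~\eqref{equation.T picture}, using that consecutive $b_i,b_{i+1}$ share a column of $T_{\ell+1}$ exactly when $i\in\mathrm{I}$ (equivalently $b_i>b_{i+1}$), and then counting. One small slip: $b_1$ does lie in the \emph{leftmost} column of $T_{\ell+1}$ (at its top), not a non-leftmost one---but this doesn't affect your argument, since it is still read after all the $\alpha$'s and $\beta$'s in that column and hence the $b_i$'s are still the last $k$ letters of $\pi'$.
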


\begin{proof}
Let $\pi\in \calOP_{n,k}$, written in minimaj order. Then by~\eqref{equation.T picture}, 
$\pi'=\read(\varphi(\pi))$ is of the form
\[
	\pi'=
	(\alpha^{\mathrm{rev}}_{\eta_{\ell+1}}\beta^{\mathrm{rev}}_{\eta_{\ell+1}-1}\cdots\beta^{\mathrm{rev}}_{\eta_\ell} \mid
	%\alpha_{\eta_{\ell}}\beta_{\eta_{\ell}-1}\cdots\beta_{\eta_{\ell-1}} \mid
	\cdots\mid
	%\alpha_{\eta_j\beta_{\eta_j-1}}\cdots\beta_{\eta_{j-1}} \mid
	\alpha^{\mathrm{rev}}_{\eta_1}\beta^{\mathrm{rev}}_{\eta_1-1}\cdots\beta^{\mathrm{rev}}_1b_1 \cdots \mid 
	\cdots \mid 
	b_{\eta_1}b_{\eta_1-1} \cdots \mid 
	\cdots \mid 
	\cdots b_k),
\]
where the superscript $\mathrm{rev}$ indicates that the elements are listed in decreasing order (rather than increasing order).
Since the rows of a semistandard tableau are weakly increasing and the columns are strictly increasing, the blocks 
of $\pi'=\read(\varphi(\pi))$ are empty or in strictly decreasing order. This implies that $b_i$ and $b_{i+1}$ are in different 
blocks of $\pi'$ precisely when $b_i\leqslant b_{i+1}$, so a block of $\pi'$ that contains a $b_i$ cannot have a descent 
at its end. This proves~(1).

In a weak ordered multiset partition written in major index order, any block of size $r\geqslant 2$ has $r-1$ descents. 
So if $b_1,\ldots, b_k$ are contained in precisely $k-j$ blocks, then at least $j$ of these elements are contained in blocks 
of size at least two, so there are at least $j$ descents in the blocks containing the $b_i$'s. This proves~(2).
\end{proof}

\begin{remark}
Let $\pi'\in \mathcal{WOP}_{n,k}$ be in major index order such that 
there are at least $k$ elements after the rightmost occurrence of a block that is either empty or has a descent at its end.  
In this case, there exists a skew tableau $T^\bullet$ such that $\pi'=\read(T^\bullet)$. In fact, this characterizes 
$\mathcal{I} := \mathrm{im} (\read \circ \varphi)$.
\end{remark}

\begin{lemma}
\label{lemma.read}
The map $\read$ is invertible.
\end{lemma}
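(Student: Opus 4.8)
The plan is to show that $\read$ is injective on $\mathcal{I}$ and to describe its inverse explicitly, recovering the tuple of skew tableaux $T^\bullet = T_1 \times \cdots \times T_{\ell+1}$ from a weak ordered multiset partition $\pi' \in \mathcal{I}$. The key observation is that $\read$ records the columns of $T^\bullet$ from left to right, each column becoming one block of $\pi'$ read from bottom to top; so the content of the inverse is to determine which consecutive blocks of $\pi'$ get "stacked" to form the ribbon tableau $T_{\ell+1}$, and where the separating column tableaux $T_1, \dots, T_\ell$ sit. By Lemma~\ref{lemma.minimaj order} and the picture~\eqref{equation.T picture}, the last $\ell+1$ columns of $T^\bullet$ are exactly the columns of the ribbon $\gamma$, so the last $\ell+1$ blocks of $\pi'$ (reading right to left) assemble into $T_{\ell+1}$, and the preceding $\ell$ blocks are the single columns $T_\ell, \dots, T_1$. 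Thus the only data one needs to reconstruct is the integer $\ell$ together with the partition of the blocks of $\pi'$ into these groups, and this is forced by the combinatorics recorded in Lemma~\ref{lem.majproperties}.

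Concretely, I would proceed as follows. First, recall from the Remark that $\pi' \in \mathcal{I}$ precisely when, writing $\pi'$ in major index order, there are at least $k$ elements lying after the rightmost block of $\pi'$ that is empty or that ends in a descent; call the blocks after that point the \emph{tail} of $\pi'$. By Lemma~\ref{lem.majproperties}(1), the last $k$ entries of $\pi'$ are $b_1, \dots, b_k$ and $b_i, b_{i+1}$ lie in different blocks of the tail exactly when $b_i \leqslant b_{i+1}$; this means the tail blocks are strictly decreasing runs of the sequence $b_1 \dots b_k$, and the boundaries between tail blocks are exactly the non-descents (the weak ascents) among $b_1, \dots, b_k$. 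Since the $b_i$'s are read off the rows of the ribbon $T_{\ell+1}$ (row $d_1 - i_1 + j$ holding $b_{\eta_{j-1}+1}, \dots, b_{\eta_j}$), the number of tail blocks is $\ell+1$ and the run lengths are $i_1, i_2, \dots, i_{\ell+1}$; this pins down $\ell$ and all the $i_j$. The remaining entries of the first tail block (those below $b_1$ in the first column of the ribbon) together with the $\ell$ blocks immediately preceding the tail then give, respectively, the first-column overhang of $\gamma$ and the columns $T_\ell, \dots, T_1$.

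Having identified $\ell$ and the grouping, the inverse map is: take the last $\ell+1$ blocks of $\pi'$ (in major index order), reverse each to get increasing columns, and stack them as the successive rows of a ribbon shape, with the entries of the first (bottom) tail block below its top entry $b_1$ forming the vertical overhang — this is exactly the reverse of the recipe in~\eqref{equation.T picture} — producing a semistandard ribbon tableau $T_{\ell+1}$ of shape $\gamma = (1^{d_1-i_1}, i_1, \dots, i_{\ell+1})$; then take the $\ell$ blocks preceding the tail, each reversed to an increasing column, as $T_\ell, \dots, T_1$. One checks that $\read$ applied to this tuple returns $\pi'$, which is immediate from the definitions, so $\read$ is a bijection from the set of tuples $\mathsf{SSYT}(1^{c_1}) \times \cdots \times \mathsf{SSYT}(1^{c_\ell}) \times \mathsf{SSYT}(\gamma)$ (over all valid $\mathrm{D}, \mathrm{I}$) onto $\mathcal{I}$.

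The main obstacle is showing that the grouping into tail blocks is genuinely well defined and recoverable, i.e.\ that the "rightmost empty-or-descent-ending block" marker in the Remark correctly isolates exactly the $\ell+1$ ribbon columns and not more or fewer — equivalently, that every block of $\pi'$ strictly to the left of the tail is either empty or ends in a descent, while no tail block does. The "no tail block ends in a descent" direction is Lemma~\ref{lem.majproperties}(1). For the other direction, one must verify that each column tableau $T_j$ ($1 \leqslant j \leqslant \ell$), which has the form $\beta_{\eta_j} < \cdots < \alpha_{\eta_{j+1}}$, when reversed and read bottom-to-top ends with its smallest entry followed by... — since a single reversed column is strictly decreasing, if it has size $\geqslant 2$ it ends in a descent, and if it has size $0$ it is empty; the only worry is a size-$1$ column $T_j$, which would end in no descent and be nonempty, potentially corrupting the marker. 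Here one uses that such a size-$1$ column is immediately followed (to its right, i.e.\ next in $\pi'$) by the bottom of the next ribbon segment or column, and a short case analysis via Lemma~\ref{lemma.minimaj order} shows the block boundary after it is detectable because the next entry is smaller (the top of the next column is $\leqslant$ something forcing a descent across the boundary in major index order) — so I would either argue this carefully or, more cleanly, observe that the grouping need not be read off $\pi'$ in isolation at all: it is determined by $(\mathrm{D}, \mathrm{I})$, which are in turn determined by $\pi' \in \mathcal{I}$ because $\pi'$ lies in a single fiber of $\read \circ \varphi$ and the shapes $c_j, \gamma$ can be read from the block sizes. In the write-up I would favor the explicit reconstruction above and relegate this well-definedness check to the verification that the two composite maps are mutually inverse.
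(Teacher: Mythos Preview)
Your approach has a genuine gap: the ``tail'' marker (the blocks strictly after the rightmost empty-or-descent-ending block) need not coincide with the set of ribbon columns. Take $\pi = (351 \mid 24) \in \mathcal{OP}_{5,2}$ in minimaj order. Here $\ell=1$, $T_1$ is the column with entries $1,4$, and the ribbon $T_2$ is the single column with entries $2,3,5$, so $\pi' = \read(\varphi(\pi)) = (41 \mid 532)$ in major index order. The first block ends with $1$ and the second begins with $5$, so block~$1$ is nonempty and has no descent at its end; hence there is \emph{no} bad block and your tail is all of $\pi'$, not just the single ribbon column. In particular your claim that a size-$\geqslant 2$ column ``ends in a descent'' conflates an internal descent of a decreasing block with a descent across the block boundary; only the latter is relevant for the marker, and here the size-$2$ block $(41)$ has no descent at its end. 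Your fallback (b), reading the grouping off the block sizes, would itself require proving that $\ell$ is uniquely determined by those sizes, which you do not do (and which is not entirely obvious, since the ribbon shape $\gamma=(1^{d_1-i_1},i_1,\dots,i_{\ell+1})$ alone does not record $\ell$).

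The paper sidesteps this entirely with a different and simpler marker: the ribbon columns are precisely the blocks of $\pi'$ that contain at least one of the last $k$ entries. This works because by Lemma~\ref{lem.majproperties}(1) the last $k$ entries are $b_1,\dots,b_k$, each $b_i$ sits in some column of the ribbon by the construction of $\varphi$, and conversely every ribbon column meets the staircase portion of $\gamma$ and hence contains some $b_i$. With this marker the reconstruction is immediate and the well-definedness issue you flag does not arise.
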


\begin{proof}
Suppose $\pi' \in \mathcal{WOP}_{n,k}$ is in major index order such that there are at least $k$ elements after the 
rightmost occurrence of a block that is either empty or has a descent at its end. Since there are no occurrences of an 
empty block or a descent at the end of a block amongst the last $k$ elements of $\pi'$, the blocks of $\pi'$ containing 
the last $k$ elements form the columns of a skew ribbon tableau $T\in \mathsf{SSYT}(\gamma)$, and the remaining blocks 
of $\pi'$ form the column tableaux to the left of the skew ribbon tableau, so $\read$ is invertible.
\end{proof}

We are now ready to introduce the shift operators.

\begin{definition}
\label{definition.Lshift}
We define the \defn{left shift operation} $\L$ on $\pi'\in \mathcal{I} = \{ \read(\varphi(\pi)) \mid \pi \in \mathcal{OP}_{n,k}\}$ 
as follows. Suppose $\pi'$ has $m \geqslant 0$ blocks $\pi_{p_m}',\ldots, \pi_{p_1}'$ that are either empty or have 
a descent at the end, and $1 \leqslant p_m < \cdots <  p_2 < p_1<k$. Set
\[
	\L(\pi') = \L^{(m)}(\pi'),
\]
where $\L^{(i)}$ for $0\leqslant i\leqslant m$ are defined as follows:
\begin{enumerate}
\item
Set $\L^{(0)}(\pi')=\pi'$.
\item
Suppose $\L^{(i-1)}(\pi')$ for $1\leqslant i \leqslant m$ is defined. By induction, the $p_i$-th block of $\L^{(i-1)}(\pi')$ is
$\pi'_{p_i}$. Let $S_i$ be the sequence of elements starting immediately to the right of block $\pi'_{p_i}$ in $\L^{(i-1)}(\pi')$ 
up to and including the $p_i$-th descent after the block $\pi_{p_i}'$. Let $\L^{(i)}(\pi')$ be the weak ordered multiset 
partition obtained by moving each element in $S_i$ one block to its left. Note that all blocks with index smaller than
$p_i$ in $\L^{(i)}(\pi')$ are the same as in $\pi'$.
\end{enumerate}
\end{definition}

\begin{example}
\label{example.pi ex2}
Continuing Example~\ref{example.pi ex}, we have 
$\pi'=(4.1\mid 2.1\mid 7.\mid \emptyset \mid {\color{blue}6}.{\color{blue}1}\mid {\color{blue}5}.{\color{blue}4}.{\color{blue}3}
.2.1 \mid 3)$,
which is in major index order. We have $m=2$ with $p_2=3<4=p_1$, $S_1={\color{blue}61543}$, 
$S_2={\color{red}6154}$ and
\begin{equation*}
\begin{split}
	\L^{(1)}(\pi') &= (4.1\mid 2.1\mid 7. \mid {\color{red}6}.{\color{red}1}\mid {\color{red}5}.{\color{red}4}.3.\mid 2.1\mid 3),\\
	\L(\pi') = \L^{(2)}(\pi') &= (4.1\mid 2.1\mid 7.6.1 \mid 5.4.\mid 3.\mid 2.1\mid 3).
\end{split}
\end{equation*}
Note that $\maj(\pi')=28$, $\maj(\L^{(1)}(\pi'))=25$, and $\maj(\L(\pi')) = 22 = \mm(\pi)$.
\end{example}

\begin{proposition}
\label{proposition.L}
The left shift operation $\L \colon \mathcal{I} \to \mathcal{OP}_{n,k}$ is well defined.
\end{proposition}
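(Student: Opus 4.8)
The plan is to show that $\L$ indeed lands in $\mathcal{OP}_{n,k}$ (not just $\mathcal{WOP}_{n,k}$), i.e.\ that after all $m$ shift steps every block is nonempty, and moreover that the output is a genuine ordered multiset partition in major index order (so that the construction makes sense). The overall strategy is an induction on $i$, analyzing what $\L^{(i)}$ does to the blocks of $\L^{(i-1)}(\pi')$, together with a bookkeeping argument using Lemma~\ref{lem.majproperties} to guarantee that each time we look for ``the $p_i$-th descent after block $\pi'_{p_i}$'' such a descent actually exists.

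\smallskip
\noindent First I would set up the induction. By Lemma~\ref{lem.majproperties}(1), $\pi'=\read(\varphi(\pi))$ has its last $k$ entries equal to $b_1,\ldots,b_k$, and no block containing a $b_i$ has a descent at its end; hence the blocks $\pi'_{p_m},\ldots,\pi'_{p_1}$ that are empty or end in a descent all lie strictly to the left of the block containing $b_1$. I would then argue, by induction on $i$, that in $\L^{(i-1)}(\pi')$ the blocks with index $>p_i$ are exactly the blocks obtained from $\pi'$ by the shifts $S_1,\ldots,S_{i-1}$, while blocks with index $\le p_i$ are untouched; in particular $\pi'_{p_i}$ is still present. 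The key point is that the sequence $S_i$ begins just to the right of $\pi'_{p_i}$ and must stretch to the $p_i$-th descent after it. To see such a descent exists: by Lemma~\ref{lem.majproperties}(2), if $b_1,\ldots,b_k$ occupy $k-j$ blocks then there are at least $j$ descents among those blocks; each shift step $\L^{(1)},\ldots,\L^{(i-1)}$ merges blocks and can only increase the descent surplus available further to the right, so there remain enough descents past block $\pi'_{p_i}$ to absorb the shift. I would make this precise by tracking the quantity ``number of empty-or-descent-ending blocks remaining to the right of position $p_i$'' and showing each step reduces the relevant deficit by exactly one.

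\smallskip
\noindent Next I would check the two things that make $\L(\pi')\in\mathcal{OP}_{n,k}$. \emph{No block becomes empty}: moving the elements of $S_i$ one block left fills the empty block $\pi'_{p_i}$ (if it was empty) and cannot empty any block to its right, because $S_i$ terminates \emph{at} a descent, which by the definition of major index order is always the last element of its block, so the block immediately after the end of $S_i$ keeps all of its elements; blocks strictly inside $S_i$ lose all their old elements but receive the elements that were one block to their right, which is again a full nonempty block by the same descent-boundary reasoning. \emph{The result is in major index order}: each block of $\L^{(i)}(\pi')$ is a contiguous run of $\L^{(i-1)}(\pi')$ shifted wholesale, or a merge of (part of) one old block with (part of) the next, and since $\pi'$ was in strictly decreasing-within-blocks order and $S_i$ ends at a descent, the merged blocks are still strictly decreasing; I would spell out the one case where a non-descent position inside $S_i$ becomes an internal block boundary and note it is an ascent, which is legal for distinct-letter blocks in major index order. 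Finally, since after step $m$ there are no empty-or-descent-ending blocks left among the first $k$ positions (they have all been consumed), $\L(\pi')$ has all blocks nonempty, hence lies in $\mathcal{OP}_{n,k}$.

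\smallskip
\noindent The main obstacle I anticipate is the descent-counting argument that guarantees the ``$p_i$-th descent after $\pi'_{p_i}$'' always exists \emph{after} the earlier shifts have already modified the word. One has to verify that shifting $S_1,\ldots,S_{i-1}$ does not destroy descents lying to the right of position $p_i$ faster than it removes the problematic blocks; equivalently, that the invariant ``(number of descents strictly right of block $p_i$) $\ge$ (number of empty-or-descent-ending blocks strictly right of block $p_i$) $+ \,p_i$'' is preserved under $\L^{(1)},\ldots,\L^{(i-1)}$. I expect this to follow cleanly once the inductive description of which blocks are touched is nailed down, but it is the step that requires genuine care rather than routine verification; Lemma~\ref{lem.majproperties}(2) is exactly the base case of this invariant, applied at the leftmost relevant position.
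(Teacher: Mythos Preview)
Your overall strategy matches the paper's, but the argument that no block becomes empty contains a concrete error. You write that $S_i$ terminates at a descent, ``which by the definition of major index order is always the last element of its block.'' This is false: in major index order each block is written in \emph{strictly decreasing} order, so every consecutive pair \emph{inside} a block is a descent; it is the last position of a block that may fail to be a descent, not the other way round. Worse, the situation you describe is exactly the bad one. If the last element of $S_i$ \emph{were} the last element of its block, say block $r$, then shifting $S_i$ left would empty block $r$: it loses all of its elements (they move to block $r-1$) and receives nothing from block $r+1$ since $S_i$ stops there. The correct argument, which the paper gives, is the opposite implication. For $i=1$, the block $\pi'_{p_1}$ is by hypothesis the \emph{rightmost} block that is empty or ends in a descent, so no block to its right ends in a descent; hence the $p_1$-th descent after $\pi'_{p_1}$ lies strictly in the interior of its block, and that block retains at least one element after the shift. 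For $i>1$ one observes that $|S_i|$ contains exactly $p_i$ descents, strictly fewer than $p_{i-1},\ldots,p_1$, so $S_i$ ends strictly to the left of all the new descent-at-end-of-block positions created by the earlier shifts, and the same interior-of-block conclusion holds.

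Your descent-counting worry is also more complicated than necessary. The shifts $\L^{(1)},\ldots,\L^{(i-1)}$ move elements between blocks but do not alter the underlying word at all, so the set of descent positions is unchanged throughout. Since blocks $1,\ldots,p_i$ of $\L^{(i-1)}(\pi')$ coincide with those of $\pi'$, and since all of $b_1,\ldots,b_k$ lie strictly to the right of $\pi'_{p_1}$ (hence of $\pi'_{p_i}$), Lemma~\ref{lem.majproperties}(2) already gives at least $p_1$ descents there; as $p_1>p_i$, the $p_i$-th descent exists at every step. No additional invariant is needed.
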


\begin{proof}
Suppose $\pi'\in\mathcal{I}$ has $m \geqslant 0$ blocks $\pi_{p_1}',\ldots, \pi_{p_m}'$ that are either empty or have a 
descent at the end, and $1\leqslant p_m < \cdots < p_2 < p_1 < k$.
If $m=0$, then $\L(\pi')=\pi' \in \mathcal{OP}_{n,k}$ and we are done.

We proceed by induction on $m$. Note that $\L^{(1)}$ acts on the rightmost block $\pi_{p_1}'$.  
Notice that $\pi_{p_1}'$ cannot contain any of the $b_i$'s by Lemma~\ref{lem.majproperties}~(1).
Hence, since there are at least $k$ elements in the $k-p_1$ blocks following $\pi_{p_1}'$, 
by Lemma~\ref{lem.majproperties}~(2), there are at least $p_1$ descents after $\pi_{p_1}'$, so $\L^{(1)}$ can be applied 
to $\pi'$.

Observe that applying $\L^{(1)}$ to $\pi'$ does not create any new empty blocks to the right of $\pi_{p_1}'$, because 
creating a new empty block means that the last element of $S_1$, which is a descent, is at the end of a block. 
This cannot happen, since the rightmost occurrence of an empty block or a descent at the end of its block was assumed 
to be in $\pi_{p_1}'$.  However, note that applying $\L^{(1)}$ to $\pi'$ does create a new block with a descent at its end, 
and this descent is given by the $p_1$-th descent after the block $\pi_{p_1}'$ (which is the last element of $S_1$).

Now suppose $\L^{(i-1)}(\pi')$ is defined for $i \geqslant 2$. By induction, there are at least $p_1>p_i$ descents following 
the block $\pi_{p_i}'$, so the set $S_i$ of Definition~\ref{definition.Lshift} exists and we can move the elements in $S_i$
left one block to construct $\L^{(i)}(\pi')$ from $\L^{(i-1)}(\pi')$. Furthermore, $\L^{(i)}(\pi')$ does not have any 
new empty blocks to the right of $\pi_{p_i}'$. To see this, note that the number of descents in $S_i$ is $p_i$, so the 
number of descents in $S_i$ is strictly decreasing as $i$ increases. This implies that the $i-1$ newly created descents 
at the end of a block of $\L^{(i-1)}(\pi')$ occurs strictly to the right of $S_i$, and so the last element of $S_i$ cannot 
be a descent at the end of a block of $\L^{(i-1)}(\pi')$.

Lastly, $\L(\pi') = \L^{(m)}(\pi')\in \calOP_{n,k}$, since it does not have any empty blocks, and every block of 
$\L(\pi')$ is in decreasing order because either we moved every element of a block into an empty block or we moved 
elements into a block with a descent at the end.
\end{proof}

\begin{definition}
\label{definition.Rshift}
We define the \defn{right shift operation} $\R$ on $\mu\in \mathcal{OP}_{n,k}$ in major index order as follows.  
Suppose $\mu$ has $m\geqslant 0$ blocks  $\mu_{q_1}, \ldots, \mu_{q_m}$ that have a descent at the end and 
$q_1 < q_2 < \cdots < q_m$. Set
\[
	\R(\mu) = \R^{(m)}(\mu),
\]
where $\R^{(i)}$ for $0\leqslant i \leqslant m$ are defined as follows:
\begin{enumerate}
\item
Set $\R^{(0)}(\mu)=\mu$.
\item
Suppose $\R^{(i-1)}(\mu)$ for $1\leqslant i \leqslant m$ is defined. Let $U_i$ be the sequence of $q_i$ elements 
to the left of, and including, the last element in the $q_i$-th block of $\R^{(i-1)}(\mu)$.  Let $\R^{(i)}(\mu)$ be the 
weak ordered multiset partition obtained by moving each element in $U_i$ one block to its right.
Note that all blocks to the right of the $(q_i+1)$-th block are the same in $\mu$ and $\R^{(i)}(\mu)$.
\end{enumerate}
\end{definition}

Note that $\R$ can potentially create empty blocks.

\begin{example}
Continuing Example~\ref{example.pi ex2}, let $\mu = \L(\pi') = (4.1\mid 2.1\mid 7.6.1 \mid 5.4.\mid 3.\mid 2.1\mid 3)$.  
We have $m=2$ with $q_1=4<5=q_2$, $U_1=6154$, $U_2=61543$ and
\begin{equation*}
\begin{split}
	\R^{(1)}(\mu) &= (4.1\mid 2.1\mid 7. \mid 6.1 \mid 5.4.3.\mid 2.1\mid 3),\\
	\R(\mu) = \R^{(2)}(\mu) &= (4.1\mid 2.1\mid 7. \mid \emptyset \mid 6.1 \mid 5.4.3.2.1\mid 3),
\end{split}
\end{equation*}
which is the same as $\pi'$ in Example~\ref{example.pi ex2}.
\end{example}

\begin{proposition}
\label{proposition.R}
The right shift operation $\R$ is well defined and is the inverse of $\L$.
\end{proposition}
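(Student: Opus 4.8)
The goal is to show that $\R$ is well defined as a map $\mathcal{OP}_{n,k} \to \mathcal{I}$ and that $\R \circ \L = \mathrm{id}_{\mathcal{I}}$ and $\L \circ \R = \mathrm{id}_{\mathcal{OP}_{n,k}}$. The strategy is to mirror the proof of Proposition~\ref{proposition.L}: first establish by induction on $m$ (the number of blocks of $\mu$ with a descent at the end) that each step $\R^{(i)}$ can be applied, and that $\R(\mu)$ lands in $\mathcal{I}$; then, because $\L$ and $\R$ are built from the same local operation (shifting a contiguous run of elements one block over, with the run delimited by a prescribed number of descents), check that $\R$ undoes $\L$ step by step.

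\textbf{Step 1: $\R$ is well defined.} Fix $\mu \in \mathcal{OP}_{n,k}$ in major index order with blocks $\mu_{q_1},\ldots,\mu_{q_m}$ having a descent at the end, $q_1 < \cdots < q_m$. I would argue by induction on $i$ that $\R^{(i)}(\mu)$ is defined: the only thing that can fail is that the $q_i$-th block of $\R^{(i-1)}(\mu)$ has fewer than $q_i$ elements, but I would show that the earlier applications $\R^{(1)},\ldots,\R^{(i-1)}$ only add elements to (or leave unchanged) blocks with index $\leqslant q_{i-1}+1 < q_i$, so the $q_i$-th block of $\R^{(i-1)}(\mu)$ still equals $\mu_{q_i}$, which has at least $q_i$ elements because $\mu$ has $k$ blocks and $\mu_{q_i}$ has a descent at its end (hence size $\geqslant 2$; more carefully, one needs the analogue of Lemma~\ref{lem.majproperties} counting — the $q_i$ blocks $\mu_1,\ldots,\mu_{q_i}$ together must contain at least $q_i$ of the $b$'s plus the elements causing the $q_i$ descents, forcing enough mass). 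Next, I would verify that $\R(\mu) \in \mathcal{I}$: by the Remark after Lemma~\ref{lemma.read}, it suffices to check that after the rightmost empty-or-descent-at-end block of $\R(\mu)$ there are at least $k$ elements. Since $\R^{(i)}$ pushes the $i$-th descent-at-end one block to the right while consuming exactly $q_i$ elements into the move, and the $q_i$ are strictly increasing, the newly created empty/descent features migrate rightward in a controlled way; the last $k$ elements of $\mu$ (the $b$'s and the tails of the ribbon, which contain no empty block and no descent-at-end by Lemma~\ref{lem.majproperties}(1)) are never disturbed by any $\R^{(i)}$ since each $U_i$ sits in the first $q_i+1 \leqslant q_m+1 < k+1$ blocks, leaving the terminal $k$-element stretch intact.

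\textbf{Step 2: $\R = \L^{-1}$.} Both shift operations are compositions of the elementary move ``take a contiguous run of elements whose right end is the $p$-th (resp. $q$-th) descent after a distinguished block and slide it one block over.'' I would show that $\L^{(i)}$ and a matching $\R^{(i')}$ are mutually inverse elementary moves: if $\pi' \in \mathcal{I}$ has empty-or-descent-at-end blocks at positions $p_m < \cdots < p_1$, then $\L^{(1)}$ acts at $p_1$ producing a block with a descent at its end at the position where the $p_1$-th descent after $\pi'_{p_1}$ landed; conversely, in $\mu = \L(\pi')$, this is precisely the rightmost ``descent-at-end'' block, so $\R^{(m)}$ (with $q_m$ equal to that position) reverses it — the sequence $U_i$ of $q_i$ elements to the left of and including the last element of that block is exactly the sequence $S_1$ that was slid in, read back. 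Carefully: the map $p_i \mapsto$ (position created by $\L^{(i)}$) is a bijection onto $\{q_1,\ldots,q_m\}$ that reverses order, so $\R^{(i)}$ undoes $\L^{(m+1-i)}$, and composing in the correct order gives $\R \circ \L = \mathrm{id}$. The reverse identity $\L \circ \R = \mathrm{id}$ follows symmetrically, using Step 1 to know $\R(\mu) \in \mathcal{I}$ so that $\L$ may be applied.

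\textbf{Main obstacle.} The delicate point is bookkeeping the descent counts: one must track, after each elementary move, exactly how many descents lie between a given distinguished block and a given position, and confirm that the count the next move needs ($p_i$ descents, resp. $q_i$ elements) is still available and still picks out the same run — i.e., that $\L$ and $\R$ use genuinely dual delimiters (``$p$-th descent'' for $\L$ versus ``$q$ elements'' for $\R$) and that these conditions match up because inside the ribbon-tail region every block of size $r$ contributes exactly $r-1$ descents (Lemma~\ref{lem.majproperties}(2) reasoning). I expect verifying this matching — that sliding in $S_i$ of length $\ell_i$ with $p_i$ internal descents is inverted by sliding out $U_{i'}$ of length $q_{i'} = p_i$ — to be the crux, and it is essentially a careful induction rather than a new idea.
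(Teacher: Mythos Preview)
Your overall strategy matches the paper's—show that $\R^{(m+1-i)}$ undoes $\L^{(i)}$ step by step—but two concrete pieces are wrong. First, you misread $U_i$: it is the run of $q_i$ elements \emph{ending} at the last element of block~$q_i$, typically spanning several blocks; the well-definedness condition is that the first $q_i$ blocks together contain at least $q_i$ elements. This follows immediately because $\mu\in\mathcal{OP}_{n,k}$ has no empty blocks and each earlier $\R^{(j)}$ moves elements only into blocks with index $\leqslant q_j+1\leqslant q_{i-1}+1$, so blocks $q_{i-1}+2,\ldots,q_i$ of $\R^{(i-1)}(\mu)$ are still the original nonempty $\mu_{q_{i-1}+2},\ldots,\mu_{q_i}$. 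No appeal to the $b$'s or to Lemma~\ref{lem.majproperties} is needed, and none is available: those statements concern $\pi'\in\mathcal{I}$, not an arbitrary $\mu\in\mathcal{OP}_{n,k}$, so your argument that ``the last $k$ elements of $\mu$ are the $b$'s'' is a type error.

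Second, and more seriously, the matching identity you expect, $q_{i'}=p_i$, is false: in Example~\ref{example.pi ex2} one has $p_1=4$ but $|S_1|=5=q_2$. The correct computation is the following. In $\L^{(i-1)}(\pi')$, no element of $S_i$ is a descent at the end of its block (this was the key point in the proof of Proposition~\ref{proposition.L}); since blocks are strictly decreasing, every position in $S_i$ is therefore \emph{exactly one} of ``descent'' or ``block-end''. If the last element of $S_i$ sits in block $r_i$, then $S_i$ contains $p_i$ descents and $r_i-1-p_i$ block-ends (those of blocks $p_i+1,\ldots,r_i-1$), hence $|S_i|=r_i-1$. After the shift, the last element of $S_i$ becomes a descent at the end of block $r_i-1$, so the descent-at-end position created by $\L^{(i)}$ is $q_{m+1-i}=r_i-1=|S_i|$. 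It is this identity $|U_{m+1-i}|=q_{m+1-i}=|S_i|$ that makes the ``$p_i$-th descent'' delimiter for $\L$ dual to the ``$q_i$ elements'' delimiter for $\R$; with your formula $q_{i'}=p_i$ the sequences $U_{i'}$ and $S_i$ would have different lengths and the inverse argument would not go through.
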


\begin{proof}
Suppose $\mu\in \mathcal{OP}_{n,k}$ in major index order has descents at the end of the blocks 
$\mu_{q_1},\ldots, \mu_{q_m}$. If $m=0$, then $\R(\mu) = \mu \in\mathcal{OP}_{n,k} \subseteq \mathcal{WOP}_{n,k}$ 
and there is nothing to show.

We proceed by induction on $m$. The ordered multiset partition $\mu$ does not have empty blocks, so there are at 
least $q_1$ elements in the first $q_1$ blocks of $\mu$, and $\R^{(1)}$ can be applied to $\mu$.

Now suppose $\R^{(i-1)}(\mu)$ is defined for $i\geqslant2$. By induction, there are at least $q_{i-1}+1$ elements in 
the first $q_{i-1}+1$ blocks of $\R^{(i-1)}(\mu)$.  Since the blocks $\mu_{q_{i-1}+2},\ldots, \mu_{q_i}$ in $\mu$ 
are all nonempty, there are at least $q_{i-1}+1+(q_i-(q_{i-1}+1)) = q_i$ elements in the first $q_i$ blocks of $\R^{(i-1)}(\mu)$, 
so the set $U_i$ of Definition~\ref{definition.Rshift} exists and we can move the elements in $U_i$ one block to the 
right to construct $\R^{(i)}(\mu)$ from $\R^{(i-1)}(\mu)$.  

Furthermore, every nonempty block of $\R(\mu)$ is in decreasing order because the rightmost element of each $U_i$ is 
a descent. So $\R(\mu)\in\mathcal{OP}_{n,k}$ remains in major index order. This completes the proof that $\R$
is well defined.

Next we show that $\R$ is the inverse of $\L$. Observe that if $\pi' \in \mathcal{I}$ has $m$ occurrences of either an 
empty block or a block with a descent at its end, then $\mu=\L(\pi')$ has $m$ blocks with a descent at its end.  
Hence it suffices to show that $\R^{(m+1-i)}$ is the inverse operation to $\L^{(i)}$ for each $1\leqslant i \leqslant m$.

The property that the last element of $S_i$ cannot be a descent at the end of a block of $\L^{(i-1)}(\pi')$ in the proof of
Proposition~\ref{proposition.L} similarly holds for every element in $S_i$. Therefore, if the last element of $S_i$ is in the 
$r_i$-th block of $\L^{(i-1)}(\pi')$, then $|S_i| = p_i + (r_i-1-p_i) = r_i-1$ because the blocks are decreasing and none of the 
elements in $S_i$ can be descents at the end of a block.
Since the last element of $S_i$ becomes a descent at the end of the $(r_i-1)$-th block of $\L^{(i)}(\pi)$, this implies 
$r_i-1 = q_{m-i+1}$, so $U_{m-i+1} = S_i$ for every $1\leqslant i \leqslant m$. As the operation $\L^{(i)}$ is a left shift 
of the elements of $S_i$ by one block and the operation $\R^{(m+1-i)}$ is a right shift of the same set of elements 
by one block, they are inverse operations of each other.
\end{proof}

For what follows, we need to extend the definition of the major index to the set $\mathcal{WOP}_{n,k}$ of weak ordered 
multiset partitions of length $n$ and $k$ blocks, in which some of the blocks may be empty. Given 
$\pi' \in \mathcal{WOP}_{n,k}$ whose nonempty blocks are in major index order, if the block $\pi_j'\neq\emptyset$, then 
the last element in $\pi_j'$ is assigned the index $j$, and the remaining elements in $\pi_j'$ are assigned the index $j-1$
for $j=1,\ldots, k$. Then $\maj(\pi')$ is the sum of the indices where a descent occurs. This agrees 
with~\eqref{equation.maj} in the case when all blocks are nonempty.

\begin{lemma}
\label{lemma.maj change}
Let $\pi'\in \mathcal{I}$. With the same notation as in Definition~\ref{definition.Lshift},
we have for $1\leqslant i \leqslant m$
\[
	\maj(\L^{(i)}(\pi')) = \begin{cases}
	\maj(\L^{(i-1)}(\pi'))-p_i+1, & \text{if $\pi_{p_i}'=\emptyset$,}\\
	\maj(\L^{(i-1)}(\pi'))-p_i, &\text{if $\pi_{p_i}'$ has a descent at the end of its block}.
	\end{cases}
\]
\end{lemma}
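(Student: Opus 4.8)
The plan is to analyze a single application of $\L^{(i)}$ by tracking exactly which descents of $\L^{(i-1)}(\pi')$ are destroyed, which are created, and how the indices of the surviving descents change. Recall from the proof of Proposition~\ref{proposition.L} that $S_i$ consists of the elements starting immediately to the right of block $\pi'_{p_i}$ up to and including the $p_i$-th descent after that block, that $S_i$ contains exactly $p_i$ descents, and (as noted there) none of the elements of $S_i$ is a descent at the end of its block in $\L^{(i-1)}(\pi')$; in particular if the last element of $S_i$ sits in block number $r_i$ then $|S_i| = r_i - 1$. Moving every element of $S_i$ one block to the left sends block $\pi'_{p_i}$ (together with everything up through block $r_i - 1$) to a decreasing run, and leaves all blocks of index $< p_i$ and all blocks of index $\geqslant r_i$ untouched in content.

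First I would record the effect on descent \emph{positions}. The descents lying strictly inside $S_i$ or strictly to its left are unaffected in position; a descent strictly to the right of $S_i$ is also unaffected (its block index does not change, since those blocks are untouched). The only index changes come from the boundary: the last element of $S_i$ becomes a descent at the end of block $r_i - 1$ (index $r_i-1$ instead of $r_i$, a decrease of $1$), and I must check whether moving the first element of $S_i$ one block left creates or destroys a descent at the junction with block $\pi'_{p_i}$, and whether the old descent structure at the end of $S_i$'s original last block is affected. Here the two cases of the lemma split: if $\pi'_{p_i} = \emptyset$, then after the shift its contents are exactly $S_i$ minus the elements pushed further left, i.e.\ the block $p_i$ becomes nonempty and ends in an element that is \emph{not} a descent at its end (since no element of $S_i$ was such); moreover the block that was $\pi'_{p_i+1}$ loses no descent because $S_i$ started immediately to its... right — I need to be careful and instead count globally.

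The cleanest route is a global count: $\maj = \sum_j (\text{index of } j\text{-th descent})$, and $\L^{(i)}$ affects this sum through (a) the $p_i$ descents carried by $S_i$, whose indices each drop by exactly $1$ because each of those descents moves with its two bounding letters one block to the left — \emph{except} possibly the last one — and (b) a correction at the two ends of $S_i$. Concretely: every descent internal to $S_i$ (between two consecutive elements of $S_i$) drops in index by $1$; there are $p_i - 1$ of these, contributing $-(p_i-1)$. The $p_i$-th descent of $S_i$ is the descent at the very end, between the last element of $S_i$ and the first element of the block to its right; after the shift the last element of $S_i$ sits at the end of block $r_i - 1$, so this descent drops from index $r_i$ to index $r_i - 1$, contributing $-1$. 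That already gives $-(p_i - 1) - 1 = -p_i$. The remaining discrepancy is the status of the junction between block $\pi'_{p_i}$ and the (old) first element of $S_i$: if $\pi'_{p_i}$ has a descent at its end, that descent survives with the same index (block $p_i$ is unchanged) and no new descent is created at that seam, giving exactly $-p_i$, the second case. If instead $\pi'_{p_i} = \emptyset$, then block $p_i$ was empty and carried no descent, but after the shift it is nonempty and, since its new last element is not a descent-at-end, the only \emph{new} descent that can appear is an internal one inside the newly filled block $p_i$ — but I claim there is exactly one extra descent gained overall, accounting for the $+1$: filling an empty block with a decreasing run of length $t \geqslant 1$ turns $t-1$ former inter-block descents into $t-1$ intra-block descents with the \emph{same} indices while the boundary bookkeeping yields one additional descent whose index contributes $+1$ net against the $-p_i$ tally. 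I would make this precise by treating the empty-block case as: one of the indices that was "$p_i$ appearing as the end-index of a descent to the right of the empty block" now appears as "$p_i$ appearing as the end-index of a descent inside block $p_i$", shifting one descent's index up by $1$ relative to the generic computation.

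\textbf{Main obstacle.} The delicate point — and the step I expect to consume most of the write-up — is the seam analysis at the \emph{left} end of $S_i$, i.e.\ the $\pm 1$ correction distinguishing the two cases, because it requires arguing carefully about whether a descent is created, destroyed, or merely re-indexed when the first letter of $S_i$ crosses the boundary into block $\pi'_{p_i}$, using the decreasing-block structure of $\L^{(i-1)}(\pi')$ and the fact (from Proposition~\ref{proposition.L}) that $\pi'_{p_i}$ is, by construction, the rightmost block that is either empty or ends in a descent. Once that single-seam accounting is nailed down, the rest is the routine global sum above, and the two stated formulas drop out immediately. I would organize the proof as: (1) recall $|S_i| = r_i - 1$ and that no element of $S_i$ ends a block; (2) show all descents outside a neighborhood of $S_i$ keep their index; (3) compute the $-(p_i-1)$ from internal descents and $-1$ from the trailing descent; (4) do the left-seam case analysis to get the final $+1$ or $0$.
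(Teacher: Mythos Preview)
Your overall plan---track descents of $S_i$ and do a seam analysis at the left end---is the paper's plan too, but your index bookkeeping at step~(3) is wrong, and the error contaminates step~(4).

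You claim the $p_i$-th descent of $S_i$ ``drops from index $r_i$ to index $r_i-1$, contributing $-1$.'' But in $\L^{(i-1)}(\pi')$ the last element of $S_i$ is \emph{not} at the end of block $r_i$ (you yourself recalled this from Proposition~\ref{proposition.L}), so by the extended $\maj$ convention its index is already $r_i-1$, not $r_i$. After the shift it becomes the last element of block $r_i-1$, with index $r_i-1$. Hence this descent's index is \emph{unchanged}, contributing $0$, not $-1$. The first $p_i-1$ descents in $S_i$ do each drop by $1$ as you say, so the total contribution from $S_i$ is exactly $-(p_i-1)$.

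This immediately settles the empty case: if $\pi'_{p_i}=\emptyset$ there is nothing at the left seam, no descent is gained or lost there, and the change is $-(p_i-1)=-p_i+1$. Your hunt for an ``extra descent gained overall, accounting for the $+1$'' is chasing a phantom created by the earlier miscount.

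In the descent-at-end case your seam analysis is also off. You assert that the descent at the end of $\pi'_{p_i}$ ``survives with the same index (block $p_i$ is unchanged).'' But block $p_i$ is \emph{not} unchanged: elements of $S_i$ move into it, so the former last element of $\pi'_{p_i}$ is no longer last in its block, and its index drops from $p_i$ to $p_i-1$. This $-1$, together with the $-(p_i-1)$ from $S_i$, gives the stated $-p_i$. Once you correct the index of the trailing descent of $S_i$, both cases fall out in one line each and no elaborate seam case analysis is needed.
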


\begin{proof} 
Assume $\pi_{p_i}'=\emptyset$. In the transformation from $\L^{(i-1)}(\pi')$ to $\L^{(i)}(\pi')$, the index of each of the first 
$p_i-1$ descents in $S_i$ decreases by one, while the index of the last descent remains the same, since it is not at the 
end of a block in $\L^{(i-1)}(\pi')$, but it becomes the last element of a block in $\L^{(i)}(\pi')$. The indices of elements 
not in $S_i$ remain the same, so $\maj(\L^{(i)}(\pi'))=\maj(\L^{(i-1)}(\pi'))-p_i+1$ in this case.

Next assume that $\pi_{p_i}'$ has a descent at the end of the block. In the transformation from $\L^{(i-1)}(\pi')$ to $\L^{(i)}(\pi')$, 
the indices of the descents in $S_i$ change in the same way as in the previous case, but in addition, the index of the last 
descent in $\pi_{p_i}'$ decreases by one, so $\maj(\L^{(i)}(\pi'))=\maj(\L^{(i-1)}(\pi'))-p_i$ in this case.
\end{proof}

\begin{theorem} 
\label{theorem.bij OP}
Let $\psi \colon \calOP_{n,k}\rightarrow \calOP_{n,k}$ be the map defined by
\[
	\psi(\pi) = \L(\read(\varphi(\pi))) \qquad \text{for $\pi\in \calOP_{n,k}$ in minimaj order.}
\]
Then $\psi$ is a bijection that maps ordered multiset partitions in minimaj order to ordered multiset partitions in 
major index order. Furthermore, $\mm(\pi) = \maj(\psi(\pi))$.
\end{theorem}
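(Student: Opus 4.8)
The plan is to factor $\psi$ as the composite $\psi = \L\circ\read\circ\varphi$, check that each factor is a bijection onto the appropriate set, and then track how $\maj$ changes under each factor. For bijectivity: by Proposition~\ref{P:biject}, $\varphi$ is a bijection from $\calOP_{n,k}$ (with elements written in minimaj order) onto the disjoint union, over admissible pairs $(\mathrm D,\mathrm I)$, of the sets $\mathsf{SSYT}(1^{c_1},\dots,1^{c_\ell},\gamma)$; by Lemma~\ref{lemma.read}, $\read$ restricts to a bijection of this union onto $\mathcal I$; and by Propositions~\ref{proposition.L} and~\ref{proposition.R}, $\L\colon\mathcal I\to\calOP_{n,k}$ is a bijection with inverse $\R$. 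Composing, $\psi\colon\calOP_{n,k}\to\calOP_{n,k}$ is a bijection. Its target elements are in major index order because that is exactly the form in which $\L$ produces its output (see the final paragraph of the proof of Proposition~\ref{proposition.L}), while its source elements are in minimaj order since that is the form $\varphi$ takes as input.

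For the statistic, I would fix $\pi\in\calOP_{n,k}$ in minimaj order, set $\pi'=\read(\varphi(\pi))\in\mathcal I$, and let $\pi'_{p_m},\dots,\pi'_{p_1}$ (with $1\le p_m<\cdots<p_1<k$) be the blocks of $\pi'$ that are empty or have a descent at their end, as in Definition~\ref{definition.Lshift}. Summing the identity of Lemma~\ref{lemma.maj change} over $i=1,\dots,m$ gives
\[
\maj(\psi(\pi)) = \maj(\L(\pi')) = \maj(\pi') - \sum_{i=1}^{m} p_i + \#\{\, i : \pi'_{p_i}=\emptyset \,\}.
\]
Hence the theorem reduces to proving the identity
\[
\maj(\pi') = \mm(\pi) + \sum_{i=1}^{m} p_i - \#\{\, i : \pi'_{p_i}=\emptyset \,\}. \qquad (\star)
\]

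To establish $(\star)$, I would use the explicit form of $\pi'=\read(\varphi(\pi))$ from the proof of Lemma~\ref{lem.majproperties}. Blocks $\ell+1,\dots,k$ of $\pi'$ are the columns of the ribbon tableau $T_{\ell+1}$, and each such column contains one of the entries $b_1,\dots,b_k$, so by Lemma~\ref{lem.majproperties}(1) it is nonempty and has no descent at its end. Therefore every special block of $\pi'$ occurs among the first $\ell$ blocks, which are the (possibly empty) column tableaux $T_1,\dots,T_\ell$ of sizes $c_1,\dots,c_\ell$. Writing $s_j$ for the size of the $j$-th block of $\pi'$, and using that every block of $\pi'$ is strictly decreasing, the extended major index contributes the index $j-1$ for each of the $s_j-1$ internal descents of a nonempty block $\pi'_j$ and an additional index $p_i$ for each special block $\pi'_{p_i}$ that ends in a descent, so that
\[
\maj(\pi') = \sum_{j : s_j \ge 1}(s_j-1)(j-1) + \sum_{i : \pi'_{p_i}\ \text{ends in a descent}} p_i .
\]
Substituting this into $(\star)$, and observing that the empty blocks are precisely the $p_i$ with $\pi'_{p_i}=\emptyset$ (all of which lie among $j\le\ell$), identity $(\star)$ collapses to the clean statement
\[
\sum_{j=1}^{k}(s_j-1)(j-1) = \mm(\pi),
\]
where an empty block $\pi'_j$ is counted with weight $-(j-1)$.

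I expect this last block-size identity to be the main obstacle; the rest is essentially bookkeeping. The intended route is to write $\sum_{j}(s_j-1)(j-1) = \sum_{x}\bigl(\mathrm{bi}(x)-1\bigr) - \binom{k}{2}$, where $x$ ranges over the entries of $\pi'$ and $\mathrm{bi}(x)$ is the index of its block; here $\mathrm{bi}(x)-1$ is the number of columns lying strictly to the left of the column of $T^\bullet = \varphi(\pi)$ that contains $x$. Summing over the single columns of $T_1,\dots,T_\ell$ (which gives $\sum_{a=1}^{\ell}(a-1)c_a$) and over the columns of the skew ribbon $\gamma = (1^{d_1-i_1},i_1,\dots,i_{\ell+1})$, and then substituting $c_j = d_{\ell+2-j}-i_{\ell+2-j}$, should telescope to $\mm(\pi) = \sum_{j=1}^{\ell}(\ell+1-j)d_j$. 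The delicate point is controlling the column sizes of $\gamma$; the cleanest way is to compute $\sum_{x\in T_{\ell+1}}\mathrm{col}(x)$ by summing over the rows of $\gamma$ rather than its columns, since the $t$-th wide row of $\gamma$ occupies the contiguous interval of columns $[\eta_{t-1}-t+2,\ \eta_t-t+1]$ (with $\eta_t = i_1+\cdots+i_t$), which makes the cancellation transparent. It is worth checking $(\star)$ against Examples~\ref{example.pi ex}--\ref{example.pi ex2} before attempting the general argument.
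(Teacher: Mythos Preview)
Your overall strategy matches the paper's exactly: establish bijectivity of $\psi$ by factoring it as $\L\circ\read\circ\varphi$ and invoking Proposition~\ref{P:biject}, Lemma~\ref{lemma.read}, and Propositions~\ref{proposition.L}--\ref{proposition.R}; then reduce the statistic claim, via Lemma~\ref{lemma.maj change}, to the identity
\[
\maj(\pi') \;=\; \mm(\pi) \;+\; \sum_{i=1}^{m} p_i \;-\; \#\{\,i : \pi'_{p_i}=\emptyset\,\}.
\]
This is precisely the identity the paper isolates (in the last paragraph of its proof) before applying Lemma~\ref{lemma.maj change}.

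The only organizational difference is in how that identity is verified. The paper first treats the case where $\pi'$ has no empty blocks and no block-ending descents, computing $\maj(\pi')$ directly (equation~\eqref{equation.base maj}) and simplifying to $\mm(\pi)$; it then observes that each special block at position $p$ contributes a correction of $p$ (block-ending descent) or $p-1$ (empty block). You instead absorb all cases into the single block-size identity $\sum_{j=1}^{k}(s_j-1)(j-1)=\mm(\pi)$, which is an equivalent and slightly cleaner repackaging. Your sketched column-sum route for this identity is sound: summing $\mathrm{bi}(x)-1$ over the columns $T_1,\dots,T_\ell$ gives $\sum_a (a-1)c_a = \sum_{j\ge 2}(\ell+1-j)(d_j-i_j)$, and summing over the ribbon columns row-by-row (using your correct column intervals $[\eta_{t-1}-t+2,\,\eta_t-t+1]$ for the $t$-th wide row) produces exactly the terms that combine with $-\binom{k}{2}$ to yield $\sum_{j=1}^{\ell}(\ell+1-j)d_j$. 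This is essentially the same arithmetic the paper carries out in~\eqref{equation.base maj}, so once written out your argument and the paper's coincide.
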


\begin{proof} 
By Proposition~\ref{P:biject}, $\varphi$ is a bijection. By Lemma~\ref{lemma.read}, the map $\read$ is invertible, and
by Proposition~\ref{proposition.R} the shift operation $\L$ has an inverse. This implies that $\psi$ is a bijection.

It remains to show that $\mm(\pi) = \maj(\psi(\pi))$ for $\pi \in \mathcal{OP}_{n,k}$ in minimaj order.

First suppose that $\pi' = \read(\varphi(\pi))$ has no empty blocks and no descents at the end of any block.
In this case $\L(\pi')=\pi'$, so that in fact $\pi' = \psi(\pi)$. Using the definition of major index~\eqref{equation.maj} and 
the representation~\eqref{equation.T picture} (where the columns in the ribbon are viewed as separate columns due 
to $\read$), we obtain
\begin{equation}
\label{equation.base maj}
	\maj(\pi') = \sum_{j=1}^\ell (\ell+1-j) ( d_j - i_j -1) + \ell + \sum_{j=1}^\ell ( \ell+\eta_j-j),
\end{equation}
where $d_j,i_j,\eta_j = i_1 + \cdots + i_j$ are defined in Proposition~\ref{P:biject} for $\pi$.
Here, the first sum in the formula arises from the contributions of the first $\ell$ blocks and the summand $\ell$ 
compensates for the fact that $b_1$ is in the $\ell$-th block. The second sum in the formula comes from the 
contributions of the $b_i$'s. Comparing with~\eqref{equation.minimaj}, we find
\[
	\maj(\pi') = \mm(\pi) - \binom{\ell+1}{2} - \sum_{j=1}^\ell (\ell+1-j) i_j + \binom{\ell+1}{2} 
	+ \sum_{j=1}^\ell \eta_j = \mm(\pi),
\]
proving the claim.

Now suppose that $\pi' = \read(\varphi(\pi))$ has a descent at the end of block $\pi'_p$. This will contribute an extra
$p$ compared to the major index in~\eqref{equation.base maj}. If $\pi'_p=\emptyset$, then
$c_p = d_{\ell+2-p} - i_{\ell+2-p} = 0$ and the term $j=\ell+2-p$ in~\eqref{equation.base maj} should be
$(\ell+1-j)(d_j-i_j)$ instead of $(\ell+1-j)(d_j-i_j-1)$ yielding a correction term of $\ell+1-j = \ell+1-\ell-2+p=p-1$.
Hence, with the notation of Definition~\ref{definition.Lshift}, we have
\[
	\maj(\pi') = \mm(\pi) + \sum_{i=1}^m p_i - e,
\]
where $e$ is the number of empty blocks in $\pi'$. Since $\psi(\pi) = \L(\pi')$, the claim follows by 
Lemma~\ref{lemma.maj change}.
\end{proof}

%%%%%%%%%%%%%%%%%%%%%%%%%%%%%%%%%%%%%%%%%%%%%%%%%%%
\bibliographystyle{alpha}
\bibliography{paper}{}

\end{document}